\newcommand{\F}{\mathcal F}
\newcommand{\FF}{\mathbb F}
\newcommand{\E}{\mathsf E}
\newcommand{\R}{\mathbb R}
\newcommand{\N}{\mathbb N}
\newcommand{\B}{\mathcal B}
\newcommand{\Beta}{\mathrm B}
\newcommand{\eps}{\varepsilon}
\newcommand{\oD}{\overline D}
\newcommand{\Der}{\mathcal D}
\newcommand{\ind}{\mathbbm1}
\DeclareMathOperator{\Div}{div}
\newcommand*{\set}[1]{\left\{#1\right\}}
\newcommand*{\abs}[1]{\left\lvert#1\right\rvert}
\newcommand*{\norm}[1]{\left\lVert#1\right\rVert}
\newtheorem{theorem}{Theorem}[section]
\newtheorem{lemma}{Lemma}[section]
\newtheorem{proposition}{Proposition}[section]
\theoremstyle{definition}
\newtheorem{definition}{Definition}[section]
\theoremstyle{remark}
\newtheorem{remark}{Remark}[section]
\begin{document}
\selectlanguage{english}

\title[Mild solution to stochastic heat equation]{Existence and uniqueness of mild solution\\
to stochastic heat equation\\ with white and fractional noises}

\author{Yu. Mishura}
\address{Department of Probability Theory, Statistics and Actuarial Mathematics, Taras Shev\-chenko National University of Kyiv,
64 Volodymyrska, 01601 Kyiv, Ukraine} \email{myus@univ.kiev.ua}

\author{K. Ralchenko}
\address{Department of Probability Theory, Statistics and Actuarial Mathematics, Taras Shev\-chenko National University of Kyiv,
64 Volodymyrska, 01601 Kyiv, Ukraine}
\email{k.ralchenko@gmail.com}

\author{G. Shevchenko}
\address{Department of Probability Theory, Statistics and Actuarial Mathematics, Taras Shev\-chenko National University of Kyiv,
64 Volodymyrska, 01601 Kyiv, Ukraine}
\email{zhora@univ.kiev.ua}

\subjclass[2010]{60H15, 35R60, 35K55, 60G22}
\date{}
\keywords{Fractional Brownian motion, stochastic partial differential equation, Green's function}

\begin{abstract}
We prove the existence and uniqueness of a mild solution for a class of non-autonomous parabolic mixed stochastic partial
differential equations defined on a bounded open subset $D \subset \R^d$ and involving standard and fractional $L^2(D)$-valued Brownian motions.
We assume that the coefficients are homogeneous, Lipschitz continuous and the coefficient at the fractional Brownian motion is an affine function.
\end{abstract}

\maketitle

\section{Introduction}

Let $(\Omega,\F,\Prob)$ be a complete probability space.
For a fixed $T>0$ let $\FF=\set{\F}_{t\in[0,T]}$ be a filtration satisfying the standard assumptions.
Let $\beta\in(0,1)$ be fixed throughout the paper.
Assume that $D\subset\R^d$ is a bounded domain with boundary $\partial D$ of class $C^{2+\beta}$.

We consider the following stochastic partial differential equation with boundary conditions
\begin{align}
du(x,t) &= \Bigl(\Div\bigl(k(x,t)\nabla u(x,t)\bigr)+f\bigl(u(x,t)\bigr)\Bigr)dt
+ g\bigl(u(x,t)\bigr) W(x,dt) \notag
\\
&\quad + h\bigl(u(x,t)\bigr) W^H(x,dt),
\qquad (x,t) \in D\times(0,T], \label{eq:spde}
\\
u(x,0) &= \varphi(x),
\qquad x \in D, \label{eq:init}
\\
\frac{\partial u(x,t)}{\partial n(k)}&=0,
\qquad (x,t)\in \partial D \times (0,T]. \label{eq:boundary}
\end{align}
Here
$W$ is an $L^2(D)$-valued Wiener process and $W^H$ is an $L^2(D)$-valued fractional Brownian motion with the Hurst index $H\in(1/2,1)$.
Furthermore, $k=\set{k_{i,j}}\colon D\to \R^{d\times d}$ is a matrix-valued field, consequently,
\[
\Div\bigl(k(x,t)\nabla u(x,t)\bigr)=\sum_{i,j=1}^d \frac{\partial}{\partial x_i} \left(k_{i,j}(x,t) \frac{\partial}{\partial x_j}u(x,t)\right);
\]
$n(k)(x)\coloneqq k(x,t)n(x)$ denotes the conormal vector-field, and the relation \eqref{eq:boundary} stands for conormal derivative of $u$ relative to  $k$, that is
\[
\frac{\partial u(x,t)}{\partial n(k)}
=\sum_{i,j=1}^d k_{i,j}(x,t) n_i(x) \frac{\partial}{\partial x_j}u(x,t),
\]
where $n(x)\in\R^d$ is an outer normal vector to $\partial D$.
We are interested in the existence and uniqueness of a mild solution to \eqref{eq:spde}--\eqref{eq:boundary}.
The precise statement of the problem and the definition of a mild solution will be given in Sections \ref{sec:prelim} and \ref{sec:mild}.

In the pure Wiener case, where $h=0$, the problem was investigated by Sanz-Sol\'e and Vuillermot \cite{SSV03}, who introduced three different notions for solutions (namely, variation solutions of the first and the second kind and mild solutions)
and showed their indistinguishability.
Also, they proved the existence, the uniqueness and the pointwise boundedness of the moments along with the spatial Sobolev regularity of such solutions.
Later their results for mild solutions were improved in several directions by Veraar \cite[Sec.~8]{Veraar10}.
In particular, he proved the existence and uniqueness of a mild solution in the case of random coefficients, depending also on $(x,t)$.

The pure ``fractional'' case, where $g = 0$, was studied in \cite{NV06} and \cite{SSV09}.
In \cite{NV06} the existence, uniqueness and indistinguishability of two types of variational solutions were proved, assuming that the coefficients $f$ and $h$ are Lipschitz continuous and the derivative of $h$ is H\"older continuous.
In \cite{SSV09} the authors proved the existence of a mild solution and established its relation with the variational solution of type II from \cite{NV06} and the H\"older continuity of its sample paths.
When $h$ is an affine function, they also proved the uniqueness of a mild solution and the indistinguishability of mild and variational solutions.

In this paper we will consider mild formulation of the stochastic heat equation \eqref{eq:spde} with white and fractional noises:
\begin{equation}
\begin{aligned}
u(\cdot,t) &= U(t,0)\varphi(\cdot)
+ \int_0^t U(t,s)\big(f(u(\cdot,s))\,ds \\ & +
g(u(\cdot,s))dB(\cdot,s)
+ h(u(\cdot,s))dB^H(\cdot, s),
\end{aligned}
\label{eq:mild1}
\end{equation}
where $U(t,s) = \exp\set{\int_s^t A_u du}$ is the evolution family on $L^2(D)$ corresponding to the elliptic operator $A_t\psi(\cdot) = \operatorname{div}\big(k(\cdot,t) \nabla \psi(\cdot)\big)$ with Neumann boundary conditions \eqref{eq:boundary}; the precise formulations will be given later. We show that equation \eqref{eq:spde} has a unique mild solution.
The conditions on the coefficients are similar to those of
\cite{SSV03} and \cite{SSV09}.
In particular, the functions $f$ and $g$ are assumed to be Lipschitz continuous, and $h$ is assumed to be an affine function.
In order to analyze the equation with two different noises (and, consequently, with two different types of stochastic integrals), we replace the fractional Brownian motion by a smooth process, transforming the equation \eqref{eq:spde} into a stochastic partial differential equation with random drift driven by a Wiener process.
This approach was developed for ordinary stochastic differential equations involving both Wiener process and fractional Brownian motion in the article \cite{MSh12}.
In \cite{Sh14} it was applied to mixed stochastic delay equations.

We organize this article in the following way.
In Section~\ref{sec:prelim}, we formulate the assumptions, define $L^2(D)$-valued Wiener and fractional Brownian processes and introduce the corresponding stochastic integrals.
Also, this section contains some properties of Green's function associated with our equation.
In Section~\ref{sec:mild} we define a mild solution and prove its existence and uniqueness.
In Appendix, we collect auxiliary estimates for solutions that are used for the proving of main result.

\section{Preliminaries}
\label{sec:prelim}

\subsection{Assumptions on the coefficients and on the initial value}
\begin{enumerate}[label=\bf(A\arabic*),ref=\rm(A\arabic*)]
\item\label{(A1)}
Assumptions on $k$ and $n$:
\begin{enumerate}[(i)]
\item
$k_{i,j}=k_{j,i}$ for all $i,j=1,\ldots,d$;
\item
$k_{i,j} \in C^{\beta,\beta'}(\oD\times[0,T])$
for some $\beta'\in(\frac12,1]$ and for all $i,j=1,\ldots,d$;
\item
$\frac{\partial}{\partial x_l} k_{i,j} \in C^{\beta,\beta/2}(\oD \times [0,T])$ for all $i,j,l=1,\ldots,d$;
\item
there exists $\underline k>0$ such that
\[
\sum_{i,j=1}^d k_{i,j}(x,t)q_iq_j \ge \underline k \abs{q}^2,
\]
for all $x\in\oD$, $t\in[0,T]$, $q\in\R^d$
(here $\abs{\,\cdot\,}$ denotes the Euclidean norm in~$\R^d$);
\item
$\displaystyle (x,t)\mapsto\sum_{i=1}^dk_{i,j}(x,t)n_i(x) \in C^{1+\beta,(1+\beta)/2}(\partial D\times[0,T])$ for each $j$;
\item
the conormal vector-field
$(x,t)\mapsto n(k)(x,t)= k(x,t)n(x)$
is outward pointing, nowhere tangent to $\partial D$ for every $t$.
\end{enumerate}

\item\label{(A4)}
The initial condition satisfies $\varphi\in C^{2+\beta}(\oD)$ and the conormal boundary condition \eqref{eq:boundary} relative to $k$.

\item\label{(A2)}
$f,g\colon\R\to\R$ are Lipschitz continuous functions.

\item\label{(A3)}
$h\colon\R\to\R$ is an affine function.

\end{enumerate}

\subsection{Norms and spaces}
Let $\norm{\,\cdot\,}_2$ and $\norm{\,\cdot\,}_\infty$ be the norms in $L^2(D)$ and $L^\infty(D)$ respectively.
For $\alpha\in(0,1)$ denote by $\B^{\alpha,2}\left (0,T;L^2(D)\right)$ the Banach space of Lebesgue-measurable mappings $u\colon[0,T]\to L^2(D)$ endowed with the norm
\[
\norm{u}_{\alpha,2,T}^2 := \left(\sup_{t\in[0,T]}\norm{u(t)}_2\right)^2
+ \int_0^T \left( \int_0^t \frac{\norm{u(t)-u(s)}_2}{(t-s)^{\alpha+1}}\,ds \right )^2 dt < \infty.
\]
Denote also for $u\colon[0,T]\to L^2(D)$
\[
\norm{u}_{\alpha,1,T} := \int_0^T\left(\frac{\norm{u(t)}_2}{t^\alpha}
+  \int_0^t \frac{\norm{u(t)-u(s)}_2}{(t-s)^{\alpha+1}}\,ds \right ) dt.
\]


For $f\colon[0,T]\to\R$ and $\alpha\in(0,1)$ define a seminorm
\[
\norm{f}_{\alpha,0;t} = \sup_{0\le u< v < t}\left(\frac{\abs{f(v)-f(u)}}{(v-u)^{1-\alpha}} + \int_u^v \frac{\abs{f(u)-f(z)}}{(z-u)^{2-\alpha}}\,dz \right).
\]

\subsection{Green's function}
Let $G\colon\set{(x,t,y,s):0\le s<t\le T,x,y\in\oD} \to \R$ be the parabolic Green's function associated with the principal part of \eqref{eq:spde}.
It is known from \cite{EI70,Eidelman98} that under assumptions \ref{(A1)} and \ref{(A4)}
$G$ is a continuous function, twice continuously differentiable in $x$, once continuously differentiable in $t$.
For every $(y,s)\in D\times(0,T]$ it is a classical solution to the linear initial-boundary value problem
\begin{align*}
\partial_t G(x,t;y,s) &= \Div\bigl(k(x,t)\nabla_x G(x,t;y,s)\bigr),
\qquad (x,t) \in D\times(0,T], 
\\
\frac{\partial G(x,t;y,s)}{\partial n(k)}&=0,
\qquad (x,t)\in \partial D \times (0,T],
\\
\int_D G(\cdot,s;y,s) \varphi(y)\,dy &:= \lim_{t\downarrow s} \int_D G(\cdot,t;y,s) \varphi(y)\,dy = \varphi(\cdot).
\end{align*}
Moreover, $G$ satisfies the heat kernel estimates
\[
\abs{\partial_x^\gamma \partial_t^\delta G(x,t;y,s)}
\le C(t-s)^{-\frac12(d+\abs{\gamma}+2\delta)} \exp\set{-C\frac{\abs{x-y}^2}{t-s}}
\]
for $\gamma=(\gamma_1,\dots,\gamma_d)$,
$\gamma_1,\dots,\gamma_d,\delta\in\N\cup\set{0}$, and
$\abs{\gamma} + 2\delta \le 2$ with
$\abs{\gamma} = \sum_{j=1}^d\gamma_j$.
In particular, for $\abs{\gamma}=\delta=0$, we have
\begin{equation}\label{eq:gaus}
\abs{G(x,t;y,s)}
\le C(t-s)^{-\frac d2} \exp\set{-C\frac{\abs{x-y}^2}{t-s}}.
\end{equation}
We shall refer to \eqref{eq:gaus} as the \emph{Gaussian property} of $G$.

The evolution family corresponding to the operator $A_t\psi(\cdot) = \operatorname{div}\big(k(\cdot,t) \nabla \psi(\cdot)\big)$ with Neumann boundary conditions \eqref{eq:boundary} is defined as
\begin{equation}\label{eq:U(t,s)}
U(t,s)\psi(x) = \int_{D} G(x,t;y,s) \psi(y)dy.
\end{equation}
From the Gaussian estimates \eqref{eq:gaus} it can be easily shown
that the family $U(t,s)$ is of contractive type on $L^2(D)$. Indeed,
denoting by $\zeta$ a standard Gaussian vector in $\R^d$, we have from
\eqref{eq:gaus} for $\psi\in L^2(D)$
$$
|U(t,s)\psi(x)| \le C\E \abs{\psi(x + c\sqrt{t-s}\zeta )\ind_{D}(x + c\sqrt{t-s}\zeta)}
$$
with some $c>0$, whence
\begin{align*}
&\norm{U(t,s)\psi}^2_{2}  \le C\int_{D} \big(\E \abs{\psi(x +
c\sqrt{t-s}\zeta )} \ind_{D}(x + c\sqrt{t-s}\zeta)\big)^2 dx \\
&\le   C\E \int_{D}  \psi(x + c\sqrt{t-s}\zeta )^2\ind_{D}(x + c\sqrt{t-s}\zeta)
dx \le C\E \norm{\psi}^2_{2}.
\end{align*}

In the proposition below we collect other useful estimates for $G$,
see Eqs. (3.4), (3.5) and (3.36) in \cite{SSV09}.

\begin{proposition}[\cite{SSV09}]
Under assumption \ref{(A1)},
for all $x,y\in D$ and $\delta\in(\frac{d}{d+2},1)$,  $G$ satisfies the following inequalities.
\begin{enumerate}[(i)]
\item
For all $0<r<v<t<T$ and some $t^*\in(r,v)$,
\begin{equation}\label{eq:deltaG1}
\abs{G(x,t;y,v)-G(x,t;y,r)} \le (t-v)^{-\delta} (v-r)^{\delta} (t-t^*)^{-d/2} \exp\set{-C\frac{\abs{x-y}^2}{t-t^*}}.
\end{equation}
\item
For all $0<v<s<t<T$ and some $v^*\in(s,t)$,
\begin{equation}\label{eq:deltaG}
\abs{G(x,t;y,v)-G(x,s;y,v)} \le (t-s)^\delta (s-v)^{-\delta} (v^*-v)^{-d/2} \exp\set{-C\frac{\abs{x-y}^2}{v^*-v}}.
\end{equation}
\item
For all  $0<r<v<s<t<T$ and some $v^*, r^* \in (s,t)$,
\begin{align}
\MoveEqLeft
\abs{G(x,t;y,v)-G(x,s;y,v)-G(x,t;y,r)-G(x,s;y,r)}
\notag
\\
&\le (t-s)^\delta (s-v)^{-1} (v-r)^{1-\delta}
\notag
\\
&\quad\times \left(
(v^*-v)^{-d/2} \exp\set{-C\frac{\abs{x-y}^2}{v^*-v}}
+(r^*-r)^{-d/2} \exp\set{-C\frac{\abs{x-y}^2}{r^*-r}}\right).
\label{eq:deltaG2}
\end{align}
\end{enumerate}
\end{proposition}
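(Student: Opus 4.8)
The three bounds are all increment estimates for $G$ in its two time variables, and the plan is to obtain them from the Gaussian property \eqref{eq:gaus} and the heat kernel estimates by the fundamental theorem of calculus combined with a splitting of the singular kernel. Throughout I use, besides $\abs{\partial_t G(x,t;y,s)}\le C(t-s)^{-d/2-1}\exp\set{-C\abs{x-y}^2/(t-s)}$ (the heat kernel estimate with one forward time derivative), the analogous bound for the backward derivative $\partial_s G$, which holds with the same exponents because $G$ solves the adjoint parabolic problem in the variables $(y,s)$.

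For \eqref{eq:deltaG1} I would write the increment as $G(x,t;y,v)-G(x,t;y,r)=\int_r^v\partial_s G(x,t;y,s)\,ds$ and split the kernel as
\[
(t-s)^{-d/2-1}=(t-s)^{-(1-\delta)}\,(t-s)^{-(d/2+\delta)}.
\]
Bounding the second factor (together with the exponential) by its supremum over $s\in(r,v)$, which is attained at some $t^*\in(r,v)$ and equals $(t-t^*)^{-(d/2+\delta)}\exp\set{-C\abs{x-y}^2/(t-t^*)}$, and integrating the first factor with the elementary inequality $(t-r)^\delta-(t-v)^\delta\le(v-r)^\delta$, gives $(v-r)^\delta(t-t^*)^{-(d/2+\delta)}\exp\set{-C\abs{x-y}^2/(t-t^*)}$ up to a constant; finally $(t-t^*)^{-\delta}\le(t-v)^{-\delta}$ (since $t^*<v$) produces the stated form. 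Inequality \eqref{eq:deltaG} is entirely symmetric: one writes $G(x,t;y,v)-G(x,s;y,v)=\int_s^t\partial_\tau G(x,\tau;y,v)\,d\tau$, splits $(\tau-v)^{-d/2-1}$ the same way, integrates in $\tau$ to create the factor $(t-s)^\delta$, and bounds the remaining factor by its supremum, attained at some $v^*\in(s,t)$.

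The mixed second difference \eqref{eq:deltaG2}, which I read as $D(t)-D(s)$ with $D(\tau):=G(x,\tau;y,v)-G(x,\tau;y,r)$, cannot be treated by differentiating in both time variables at once: the mixed derivative $\partial_\tau\partial_\sigma G$ scales like $(\tau-\sigma)^{-d/2-2}$, i.e. is of order $d/2+2$, which exceeds the order $d/2+1$ available from the heat kernel estimates. Instead I would interpolate (take a weighted geometric mean with weights $\delta$ and $1-\delta$) between two one-sided estimates. The first, call it $P$, applies the integral representation in the forward variable and the triangle inequality in the backward variable, yielding a bound with time-gap factor $(t-s)\,(s-v)^{-1}$ and the two Gaussians at intermediate points $v^*,r^*\in(s,t)$ exactly as on the right-hand side of \eqref{eq:deltaG2}. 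The second, call it $Q$, applies the triangle inequality in the forward variable and the integral representation in the backward variable, yielding the time-gap factor $(v-r)\,(s-v)^{-1}$. Since
\[
(t-s)^{\delta}(s-v)^{-1}(v-r)^{1-\delta}=\bigl[(t-s)(s-v)^{-1}\bigr]^{\delta}\,\bigl[(v-r)(s-v)^{-1}\bigr]^{1-\delta},
\]
the geometric mean $P^{\delta}Q^{1-\delta}$ reproduces precisely the time-gap prefactor of \eqref{eq:deltaG2}.

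The main obstacle is this last step: the forward and backward integral representations place their intermediate times in different intervals (respectively $(s,t)$ and $(r,v)$), so $P$ and $Q$ carry spatial factors $(\,\cdot\,)^{-d/2}\exp\set{-C\abs{x-y}^2/(\,\cdot\,)}$ with different arguments, and before the geometric mean can be bounded by the single pair of Gaussians in \eqref{eq:deltaG2} one must reconcile these arguments using the monotonicity of $\mu\mapsto\mu^{-a}\exp\set{-c\abs{x-y}^2/\mu}$ and the ordering $r<v<s<t$. It is exactly in balancing the resulting powers of the time-gaps that the standing restriction $\delta>\frac{d}{d+2}$ is used, precisely as in the endpoint comparison $(t-t^*)^{-\delta}\le(t-v)^{-\delta}$ above but now with the spatial exponent $d/2$ redistributed between the two factors; the differentiation under the integral sign, the derivative estimates, and the elementary power inequalities are routine.
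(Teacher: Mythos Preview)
The paper does not prove this proposition at all; it simply quotes the three inequalities from \cite{SSV09} (equations (3.4), (3.5) and (3.36) there). So there is no in-paper argument to compare with, and your sketch must be judged on its own.

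Your treatment of \eqref{eq:deltaG1} and \eqref{eq:deltaG} is correct and is essentially the standard route: write the increment as an integral of $\partial_s G$ (resp.\ $\partial_t G$), split the singular power $(t-s)^{-d/2-1}$ as $(t-s)^{-(1-\delta)}(t-s)^{-(d/2+\delta)}$, integrate the first factor using subadditivity of $x\mapsto x^{\delta}$, and freeze the second factor at the point where it is largest. The final comparison $(t-t^*)^{-\delta}\le (t-v)^{-\delta}$ is exactly how one passes from $t^*\in(r,v)$ to the stated prefactor.

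For \eqref{eq:deltaG2} there is a genuine gap. Your plan is to bound the second difference by $P^{\delta}Q^{1-\delta}$, where $P$ carries the prefactor $(t-s)(s-v)^{-1}$ and $Q$ the prefactor $(v-r)(s-v)^{-1}$; this indeed reproduces the time-gap factor $(t-s)^{\delta}(s-v)^{-1}(v-r)^{1-\delta}$. The problem is the spatial part. You propose to reconcile the Gaussians coming from $P$ and $Q$ ``using the monotonicity of $\mu\mapsto\mu^{-a}\exp\{-c\abs{x-y}^2/\mu\}$'', but this function is \emph{not} monotone: it is unimodal, with a maximum at $\mu=c\abs{x-y}^2/a$, so one cannot simply replace one argument by a larger or smaller one. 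Worse, the geometric mean $g(\mu_1)^{\delta}g(\mu_2)^{1-\delta}$, with $g(\mu)=\mu^{-d/2}\exp\{-c\abs{x-y}^2/\mu\}$, does not in general admit a bound of the form $Cg(\mu_3)$ for any single $\mu_3$: the exponential part goes the right way by convexity of $\mu\mapsto 1/\mu$, but the power part goes the wrong way (AM--GM gives $\mu_1^{\delta}\mu_2^{1-\delta}\le \delta\mu_1+(1-\delta)\mu_2$, hence $(\mu_1^{\delta}\mu_2^{1-\delta})^{-d/2}\ge(\delta\mu_1+(1-\delta)\mu_2)^{-d/2}$). So after expanding $(A+B)^{\delta}(C+D)^{1-\delta}$ into four cross-terms you are left with products of Gaussians at mismatched scales that cannot be collapsed into the pair $g(v^*-v)+g(r^*-r)$ with $v^*,r^*\in(s,t)$ required by the statement. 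Your remark that ``the restriction $\delta>d/(d+2)$ is used here'' is not substantiated --- nothing in the interpolation step produces that threshold.

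A workable route for \eqref{eq:deltaG2} (and the one taken in \cite{SSV09}) avoids the geometric-mean step entirely: write the second difference as a double integral $\int_s^t\!\int_r^v \partial_\tau\partial_\sigma G(x,\tau;y,\sigma)\,d\sigma\,d\tau$, use the order-$d/2+2$ heat-kernel bound for the mixed derivative, and split the resulting power $(\tau-\sigma)^{-d/2-2}$ directly so that the two integrations produce $(t-s)^{\delta}$ and $(v-r)^{1-\delta}$, with one factor $(\tau-\sigma)^{-1}\le(s-v)^{-1}$ pulled out. This keeps all the Gaussian arguments in the same rectangle $(s,t)\times(r,v)$ from the start, so no reconciliation is needed.
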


\subsection{$L^2(D)$-valued Wiener and fractional Brownian processes}

Let $\set{\lambda_j, j\in\N}$  and $\set{\mu_j, j\in\N}$ be the sequences of positive real numbers and
$\set{e_i,i\in\N}$ be an orthonormal basis of $L^2(D)$.
Assume that
\begin{enumerate}[resume*]
\item\label{(A5)}
$\displaystyle\sup_j\norm{e_j}_\infty<\infty, \quad
\sum_{j=1}^\infty\lambda_j<\infty,
\quad\text{and}\quad
\sum_{j=1}^\infty\mu_j^{1/2}<\infty$.
\end{enumerate}

Let
$B_j=\set{B_j(t), t\ge0}$, $j\in\N$,
be a sequence of one-dimensional, independent Brownian motions defined on $(\Omega,\F,\FF,\Prob)$.
Define $L^2(D)$-valued Wiener process $W=\set{W(\cdot,t),t\ge0}$ by
\[
W(\cdot,t) = \sum_{j=1}^\infty \lambda_j^{1/2} e_j(\cdot) B_j(t),
\]
where the series converges in $L^2(\Omega,\F,\Prob)$, see, e.\,g., \cite[Sec.~4.1]{DaPrato14} or \cite[Sec.~3.5]{Duan14}.

Similarly, let
$B^H_j=\set{B^H_j(t), t\ge0}$, $j\in\N$,
be a sequence of one-dimensional, independent fractional Brownian motions with the Hurst parameter $H\in(0,1)$, defined on $(\Omega,\F,\FF,\Prob)$ and starting at the origin.
Following \cite{Maslowski03}, define $L^2(D)$-valued fractional Brownian process $W^H=\set{W^H(\cdot,t),t\ge0}$ by
\begin{equation}\label{eq:WH}
W^H(\cdot,t) = \sum_{j=1}^\infty \mu_j^{1/2} e_j(\cdot) B^H_j(t),
\end{equation}
where the series converges a.\,s.\ in $L^2(D)$.

\begin{remark}
The assumption
$\sum_{j=1}^\infty\mu_j<\infty$
is sufficient for the series \eqref{eq:WH} to converge in $L_2(D)$.
The stronger condition
$\sum_{j=1}^\infty\mu_j^{1/2}<\infty$
is needed for the definition of a stochastic integral with respect to this process, see Subsection~\ref{sss:int-WH} below.
\end{remark}

\subsection{Stochastic integration with respect to $L^2(D)$-valued processes}
Let $\Phi = \set{\Phi(t), t\in[0,T]}$ be an adapted stochastic process taking values in the space of linear operators on $L^2(D)$, $\psi = \set{\psi(t), t\in[0,T]}$ be an adapted process in $L^\infty(D)$.
\subsubsection{Integration with respect to $W$}
Define the stochastic integral with respect to $L_2(D)$-valued Wiener process $W$ by
\[
\int_0^t \Phi(s)\psi(s)\,dW(s) \coloneqq \sum_{j=1}^\infty \lambda_j^{1/2} \int_0^t \Phi(s) (\psi(s) e_j) \,dB_j(s),
\]
where the integrals with respect to $B_j$, $j\in\N$, are It\^o integrals, see \cite{DaPrato14,Duan14,Zabczyk01}.
The It\^o isometry of the form
\[
\E\norm{\int_0^t \Phi(s) \psi(s)\,dW(s)}_2^2 = \E \sum_{j=1}^\infty \lambda_j \int_0^t \norm{\Phi(s) (\psi(s)e_j)}_2^2 \,ds,
\]
holds if the right-hand side is finite \cite[Sec.~4]{Zabczyk01}.
Moreover, one has the following version of the Burkholder--Davis--Gundy inequality from \cite[Th.~4.36]{DaPrato14} (see also \cite[Lemma 3.24]{Duan14}):
for all $p\ge2$ there exists $C_p>0$ such that for all $t\in[0,T]$,
\[
\E\sup_{0\le s\le t}\norm{\int_0^s \Phi(v) \psi(v)\,dW(v)}_2^{p}
 \le C_p\E\left[\sum_{j=1}^\infty \lambda_j \int_0^t \norm{\Phi(s) (\psi(s)e_j)}_2^2 \,ds\right]^{p/2}.
\]
This result can be generalized to the case of stochastic convolutions (see \cite{Kotelenez84,Tubaro84}). For simplicity we formulate it for the evolution family $U(t,s)$ defined by \eqref{eq:U(t,s)}. Define
\begin{align*}
S(\cdot,t) = \int_0^t U(t,s)\psi(s)\,dW(s)
= \sum_{j=1}^\infty \lambda_j^{1/2} \int_0^t \int_D G(\cdot,t,y,s) \psi(y,s) e_j(y)dy \,dB_j(s).
\end{align*}
Then for every $p\ge2$ there exists $C_p'>0$ such that for all $t\in[0,T]$,
\begin{equation}\label{eq:Burk}
\E\sup_{0\le s\le t}\norm{S(\cdot,s)}_2^{p}
 \le C_p'\E\left[\int_0^t \norm{\psi(\cdot,s)}_2^2 \,ds\right]^{p/2}.
\end{equation}

\subsubsection{Integration with respect to $W^H$}
\label{sss:int-WH}
In order to introduce the integral with respect to $L_2(D)$-valued fractional Brownian process $W^H$ with $H>1/2$, we need to recall the definition of generalized Lebesgue--Stieltjes integral.
Let $f,g\colon[a,b]\to \R$ be two functions and $\alpha\in(0,1)$.
Then the Riemann--Liouville left- and right-sided fractional derivatives are defined by
\begin{align*}
\Der^\alpha_{a+} f(x) &= \frac1{\Gamma(1-\alpha)}\left(\frac{f(x)}{(x-a)^\alpha}+
\alpha\int_a^x\frac{f(x)-f(y)}{(x-y)^{\alpha+1}}\,dy\right),
\\
\Der^{1-\alpha}_{b-} g(x) &= \frac{1}{\Gamma(\alpha)}\left(\frac{g(x)}{(b-x)^{1-\alpha}}+
(1-\alpha)\int_x^b\frac{g(x)-g(y)}{(y-x)^{2-\alpha}}\,dy\right).
\end{align*}
Assume that
$\Der^\alpha_{a+} f\in L_1[a,b]$,
$\Der^{1-\alpha}_{b-} g_{b-}\in L_\infty[a,b]$,
where
$g_{b-}(x)=g(b-)-g(x)$.
Under these assumptions, the generalized Lebesgue--Stieltjes integral $\int_a^bf(x)dg(x)$ is defined by
\begin{equation}\label{eq:gen-int}
\int_a^bf(x)dg(x)=\int_a^b
\Der^\alpha_{a+}f(x)\,\Der^{1-\alpha}_{b-}g_{b-}(x)\,dx,
\end{equation}
see, e.\,g., \cite[Sec. 2.1]{mishura} for details.
It is not hard to see that this integral admits the bound
\begin{equation}\label{eq:bound-int}
\abs{\int_a^bf(x)dg(x)}\le C_\alpha \norm{g}_{\alpha,0;b}\int_a^b
\left(\frac{\abs{f(x)}}{(x-a)^\alpha}+
\int_a^x\frac{\abs{f(x)-f(y)}}{(x-y)^{\alpha+1}}\,dy\right)dx,
\end{equation}

Fix $H\in(1/2,1)$, $\alpha\in(1-H,1/2)$, and let $\Phi,\psi$ be as above. Assume that
\[
\sup_{j\in\N}\norm{\Phi(s)(\psi(s) e_j)}_{\alpha,1,b}<\infty,
\quad b,s\in\R^+.
\]
Following  \cite{Maslowski03}, we introduce the integral with respect to $L^2(D)$-valued fractional Brownian process by
\[
\int_a^b \Phi(s)\psi(s)\,dB^H(s) \coloneqq \sum_{j=1}^\infty \mu_j^{1/2} \int_a^b \Phi(s)(\psi(s) e_j) \,dB^H_j(s),
\]
where the integrals with respect to $B^H_j$, $j\in\N$, are pathwise generalized Lebesgue--Stieltjes integrals defined by \eqref{eq:gen-int}.
One can easily derive  from \eqref{eq:bound-int} the following inequality (see \cite[Eq. (2.16)]{Maslowski03})
\begin{equation}\label{eq:bound-int-2}
\begin{aligned}
\norm{\int_a^b \Phi(s)\psi(s)\,dB^H(s)}_2
&\le C_\alpha \xi_{\alpha,H,b} \sup_{j\in\N}
\int_a^b\left(\frac{\norm{\Phi(s)(\psi(s)e_j)}_2}{(s-a)^\alpha}\right. \\
&\left.
+  \int_a^s \frac{\norm{(\Phi(s)(\psi(s)e_j) - \Phi(v)(\psi(v)e_j)}_2}{(s-v)^{\alpha+1}}\,dv \right ) ds,
\end{aligned}
\end{equation}
where
\begin{equation}\label{eq:xi}
\xi_{\alpha,H,b}\coloneqq\sum_{j=1}^\infty \mu_j^{1/2} \norm{B^H_j}_{\alpha,0;b}.
\end{equation}
Note that the value
$\E\norm{B^H_j}_{\alpha,0;b}$ is finite by \cite[Lemma~7.5]{NR02}, moreover, it does not depend on $j$, since $B^H_j$'s are equally distributed.
Hence, using the monotone convergence theorem and assumption \ref{(A5)}, we get
\[
\E\sum_{j=1}^\infty \mu_j^{1/2} \norm{B^H_j}_{\alpha,0;b}
=\sum_{j=1}^\infty \mu_j^{1/2} \E\norm{B^H_j}_{\alpha,0;b}<\infty.
\]
Therefore the random variable
$\xi_{\alpha,H,b}$
is finite a.\,s.

\section{Existence and uniqueness of mild solution}
\label{sec:mild}

In this section, we consider unique solvability of the problem \eqref{eq:spde}--\eqref{eq:boundary}.
We understand its solution in a mild sense.
Recall that we consider $H\in(1/2,1)$.

\begin{definition}
$L^2(D)$-valued random field $\set{u(\cdot,t),t\in[0,T]}$ is a \emph{mild solution} to the problem \eqref{eq:spde}--\eqref{eq:boundary} if the following two conditions are satisfied:
\begin{enumerate}[(1)]
\item $u\in\B^{\alpha,2}\left (0,T;L^2(D)\right)$ a.\,s.\ for some $\alpha\in(1-H,1/2)$,
\item
the relation \eqref{eq:mild1}, equivalently,
\begin{align}
u(\cdot,t) &= \int_D G(\cdot,t;y,0)\varphi(y)\,dy
+ \int_0^t\int_D G(\cdot,t;y,s)f(u(y,s))\,dy\,ds \notag
\\
&\quad+ \sum_{j=1}^\infty \lambda_j^{1/2} \int_0^t \int_D G(\cdot,t;y,s)g(u(y,s))e_j(y)\,dy\,dB_j(s) \notag
\\
&\quad+ \sum_{j=1}^\infty \mu_j^{1/2} \int_0^t \int_D G(\cdot,t;y,s)h(u(y,s))e_j(y)\,dy\,dB^H_j(s)
\label{eq:mild}
\end{align}
holds a.\,s.\ for every $t\in[0,T]$ as an equality in $L^2(D)$.
\end{enumerate}
Here the integrals w.\,r.\,t.\ $B_j$, $j\in\N$, are It\^o integrals, and the integrals w.\,r.\,t.\ $B^H_j$, $j\in\N$, are path-wise generalized Lebesgue--Stieltjes integrals.
\end{definition}
For clarity, in the following we will write the integrals with respect to $W$ and $W^H$ in their full form, as in \eqref{eq:mild}.

\begin{theorem}\label{th:mild}
Assume that Hypotheses \ref{(A1)}--\ref{(A5)} hold, $H\in\left(\frac{d+1}{d+2},1\right)$.
Then the problem \eqref{eq:spde}--\eqref{eq:boundary} has a unique mild solution.
\end{theorem}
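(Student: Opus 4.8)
The plan is to prove existence and uniqueness by a fixed-point argument, but the presence of two distinct types of stochastic integrals (Itô with respect to $W$ and pathwise Lebesgue--Stieltjes with respect to $W^H$) prevents a direct contraction in the natural norm, because the fractional integral is not a martingale and the BDG-type estimate \eqref{eq:Burk} applies only to the Wiener part. Following the strategy of \cite{MSh12} and \cite{Sh14}, I would first \emph{condition on the fractional noise}: fix a realization of the fractional Brownian motions $\set{B^H_j}$, or rather replace the troublesome random variable $\xi_{\alpha,H,T}$ from \eqref{eq:xi} by treating it as an a.s.\ finite constant. Since $h$ is affine by \ref{(A3)}, the fractional integral term in \eqref{eq:mild} becomes an affine functional of $u$, which is the key structural simplification that makes the pathwise part tractable. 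The idea is to transform \eqref{eq:spde} into a stochastic partial differential equation driven only by $W$, but with a random (pathwise-dependent) drift coming from the $W^H$ term.

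The main estimates I would carry out reside in two norms: the $\B^{\alpha,2}$-norm $\norm{\cdot}_{\alpha,2,T}$ controlling the Wiener-driven and deterministic parts, and the $\norm{\cdot}_{\alpha,1,T}$-type quantity appearing in the bound \eqref{eq:bound-int-2} controlling the fractional part. I would define the solution map $\mathcal{T}$ sending $u$ to the right-hand side of \eqref{eq:mild}, and estimate each of the four terms separately. For the initial and drift terms I would use the Gaussian property \eqref{eq:gaus} together with the contractivity of $U(t,s)$ shown in the excerpt, and the Lipschitz continuity of $f$ from \ref{(A2)}. For the Wiener integral I would invoke \eqref{eq:Burk} with the Lipschitz bound on $g$. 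For the fractional integral I would apply \eqref{eq:bound-int-2}, which reduces matters to controlling $L^2(D)$-norms of space-time increments of $G(\cdot,t;y,s)h(u(y,s))e_j(y)$; here the delicate increment estimates \eqref{eq:deltaG1}, \eqref{eq:deltaG}, \eqref{eq:deltaG2} of the Green's function are exactly what is needed, and the restriction $H>\frac{d+1}{d+2}$ should emerge from requiring the relevant singular integrals in $(t-s)^{-\delta}$, $(v-r)^{\delta}$ with $\delta\in(\frac{d}{d+2},1)$ and $\alpha\in(1-H,1/2)$ to be integrable. The auxiliary estimates promised for the Appendix presumably furnish precisely these bounds on $\norm{\mathcal T u}_{\alpha,2,T}$ and on the Lipschitz difference $\norm{\mathcal T u-\mathcal T v}$.

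With these ingredients, I would first prove that $\mathcal T$ maps $\B^{\alpha,2}(0,T;L^2(D))$ into itself and then establish a contraction-type inequality of the form $\norm{\mathcal T u-\mathcal T v}_{\alpha,2,T_0}\le \tfrac12\norm{u-v}_{\alpha,2,T_0}$ on a sufficiently small time interval $[0,T_0]$, where $T_0$ may depend on the realization through $\xi_{\alpha,H,T}$. Because all constants are deterministic or controlled by the a.s.\ finite $\xi_{\alpha,H,T}$, Banach's fixed-point theorem yields a unique local mild solution; I would then patch local solutions together over $[0,T]$ by iterating on consecutive subintervals, using that the choice of $T_0$ can be made uniform along a given sample path. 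Uniqueness on the whole interval follows from the local contraction estimate applied on each subinterval.

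The hard part will be the fractional-integral estimate: unlike the Itô term, bound \eqref{eq:bound-int-2} requires controlling Hölder-type increments of the convolution $\int_D G(\cdot,t;y,s)h(u(y,s))e_j(y)\,dy$ both in the integration variable $s$ and in the evaluation time $t$, and making these increments produce a genuine contraction factor (rather than merely boundedness) is where the affine structure of $h$ and the fine Green's-function estimates \eqref{eq:deltaG}--\eqref{eq:deltaG2} must be used in concert. Ensuring that the singularities at $s\to t$ and at the diagonal remain integrable forces the precise interplay between $\alpha$, $\delta$, and $H$, and I expect verifying that the admissible parameter ranges are nonempty exactly under $H>\frac{d+1}{d+2}$ to be the technically most demanding and delicate step.
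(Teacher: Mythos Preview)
Your proposal correctly identifies all the analytical ingredients (the Green's function increment estimates \eqref{eq:deltaG1}--\eqref{eq:deltaG2}, the bound \eqref{eq:bound-int-2} with the random constant $\xi_{\alpha,H,T}$, the BDG inequality \eqref{eq:Burk}, the affine structure of $h$, and the way the restriction $H>\frac{d+1}{d+2}$ arises from the need $\alpha<\frac{1}{d+2}$ with $\alpha>1-H$). The architectural flaw is in how you intend to combine them. A direct Banach fixed-point argument on the full map $\mathcal T$ requires a single norm in which \emph{both} integrals contract. The It\^o term can only be controlled in an $L^2(\Omega)$-type norm via \eqref{eq:Burk}; the fractional term, through \eqref{eq:bound-int-2}, carries the \emph{random} factor $\xi_{\alpha,H,T}$, which is not bounded and cannot be pulled out of the expectation (nor is independence of $W$ and $W^H$ assumed, so ``conditioning on the fractional noise'' is not available). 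Your remedy of letting $T_0$ depend on the realization does not rescue this: with a random $T_0$ the It\^o integral on $[0,T_0]$ is not a well-posed object for each fixed $\omega$, and the BDG estimate \eqref{eq:Burk} is not a pathwise statement. In short, the contraction inequality $\norm{\mathcal Tu-\mathcal Tv}_{\alpha,2,T_0}\le\tfrac12\norm{u-v}_{\alpha,2,T_0}$ cannot be established either pathwise (It\^o part fails) or in expectation (fractional part fails), so the fixed-point scheme as written does not close.

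The paper resolves this incompatibility differently. It first localizes the fractional noise by the stopping time $\tau_N=\inf\{t:\xi_{\alpha,H,t}\ge N\}\wedge T$, and then \emph{replaces} the stopped fractional Brownian motions $B^{H,N}_j$ by smooth approximations $B^{H,N,n}_j$. This turns the fractional integral into an honest Lebesgue drift, so the approximating equation is a genuine Wiener-driven SPDE with random Lipschitz drift, to which the existing theory of \cite{Veraar10} applies directly and yields a unique solution $u_{N,n}$. The core of the work is then to show, via the appendix lemmas, that $(u_{N,n})_n$ is Cauchy in probability in $\norm{\cdot}_{\alpha,2,T}$ (here the localization makes $\xi\le N$ deterministic inside the estimates, so expectation and pathwise bounds can finally be combined), that the limit $u_N$ solves \eqref{eq:mild} with $B^{H,N}_j$, and that the $u_N$ are consistent as $N\to\infty$. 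Uniqueness follows from the same Gronwall-type estimate (Lemma~\ref{l:3}) used for the Cauchy property. If you want to salvage a fixed-point route, you would have to build in this stopping-time localization explicitly and run Picard iterations in the norm $\E[\norm{\cdot}_{\alpha,2,t}^2\ind_{A_{N,t}}]$; that is essentially what Lemmas~\ref{l:1}--\ref{l:3} provide, but it is not the argument you described.
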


The proof will be divided into several logical steps.
First fix some $\alpha\in\left(1-H,\frac{1}{d+2}\right)$.

\subsection{Construction of approximations}

Fix $N\ge1$.
Let
\[
\tau_N = \inf\set{t:\xi_{\alpha,H,t}\ge N}\wedge T,
\]
where $\xi_{\alpha,H,t}$ is defined in \eqref{eq:xi}.
Put $B^{H,N}_j(t)=B^{H}_j(t\wedge\tau_N)$, $t\in[0,T]$, $j\in\N$.
For each $n,j\in\N$ define a smooth approximation of $B^{H,N}_j$ by
\begin{equation}\label{eq:approx-B}
B^{H,N,n}_j(t) = n \int_{(t-1/n)\vee0}^t B^{H,N}_j(s)\,ds,
\end{equation}
see \cite{MSh12}.
Consider the equation
\begin{align}
u_{N,n}(\cdot,t) &= \int_D G(\cdot,t;y,0)\varphi(y)\,dy
+ \int_0^t\int_D G(\cdot,t;y,s)f(u_{N,n}(y,s))\,dy\,ds
\notag
\\
&\quad+ \sum_{j=1}^\infty \lambda_j^{1/2} \int_0^t \int_D G(\cdot,t;y,s)g(u_{N,n}(y,s))e_j(y)\,dy\,dB_j(s)
\notag
\\
&\quad+ \sum_{j=1}^\infty \mu_j^{1/2} \int_0^t \int_D G(\cdot,t;y,s)h(u_{N,n}(y,s))e_j(y)\,dy\,\frac{d}{ds}B^{H,N,n}_j(s)\,ds,
\label{eq:approx-u}
\end{align}
or
\begin{align*}
u_{N,n}(\cdot,t) &= \int_D G(\cdot,t;y,0)\varphi(y)\,dy
+ \int_0^t\int_D G(\cdot,t;y,s)b^{N,n}(u_{N,n}(y,s),y,\omega,s)\,dy\,ds
\\
&\quad+ \sum_{j=1}^\infty \lambda_j^{1/2} \int_0^t \int_D G(\cdot,t;y,s)g(u_{N,n}(y,s))e_j(y)\,dy\,dB_j(s),
\end{align*}
where
\[
b^{N,n}(u,\omega,x,s)=f(u)+h(u)\sum_{j=1}^\infty \mu_j^{1/2} e_j(x)\frac{d}{ds}B^{H,N,n}_j(s)
\]
is a random drift depending also on $(y,s)$.
In other words, $u_{N,n}$ is a mild solution of the equation
\[
du(x,t) = \Bigl(\Div\bigl(k(x,t)\nabla u(x,t)\bigr)+b^{N,n}\bigl(u(x,t),x,\omega,t\bigr)\Bigr)dt
+ g\bigl(u(x,t)\bigr) W(x,dt),
\]
$(x,t) \in D\times[0,T]$,
with initial-boundary conditions \eqref{eq:init}--\eqref{eq:boundary}.
Such equations were studied in \cite[Sec.~8]{Veraar10}.

By assumption \ref{(A5)},
\begin{align*}
\MoveEqLeft
\abs{\sum_{j=1}^\infty \mu_j^{1/2} e_j(y)\frac{d}{ds}B^{H,N,n}_j(s)}
\le
C\sum_{j=1}^\infty \mu_j^{1/2} \abs{\frac{d}{ds}B^{H,N,n}_j(s)}
\\
&= Cn\sum_{j=1}^\infty \mu_j^{1/2} \abs{B^{H,N}_j(s)-B^{H,N}_j\left((s-\tfrac1n)\vee0\right)}
\\
&\le Cn^{\alpha}\sum_{j=1}^\infty \mu_j^{1/2} \norm{B^{H,N}_j}_{0,\alpha,s}
\le C n^{\alpha} N.
\end{align*}
Therefore, the function $b^{N,n}$ satisfies the following conditions:
for all $y\in D$, $\omega\in\Omega$, $s\in[0,T]$ and $u,v\in\R$,
\begin{gather}
\abs{b^{N,n}(u,y,\omega,s)-b^{N,n}(v,y,\omega,s)}\le C\abs{u-v},
\\
\abs{b^{N,n}(u,y,\omega,s)}\le C(1+\abs{u}).
\end{gather}
Then, by \cite[Example~8.2]{Veraar10}, there exists a unique mild solution $u_{N,n}$
with paths in $C\left ([0,T];L^2(D)\right )$ a.\,s.
Moreover, $u_{N,n}$ belongs to $C^{\beta_1,\beta_2}\left (\oD\times[0,T]\right)$
for all $\beta_1\in(0,1)$, $\beta_2\in(0,1/2)$.

\subsection{Convergence of approximations}
Let us prove that, for a fixed $N\ge1$, the sequence
$\set{u_{N,n}, n\ge1}$
is fundamental in probability in the norm
$\norm{\,\cdot\,}_{\alpha,2,T}$.
For all $\eps>0$, $R\ge1$ and $n,m\in\N$, we have
\begin{align}
\MoveEqLeft
\Prob\left(\norm{u_{N,n}-u_{N,m}}_{\alpha,2,T}>\eps\right)
\notag
\\
&\le\Prob\left(\norm{u_{N,n}-u_{N,m}}_{\alpha,2,T}>\eps, \norm{u_{N,n}}_{\alpha,2,T}\le R, \norm{u_{N,m}}_{\alpha,2,T}\le R\right)
\notag
\\
&\quad+ \Prob\left(\norm{u_{N,n}}_{\alpha,2,T}>R\right)
+ \Prob\left(\norm{u_{N,m}}_{\alpha,2,T}>R\right).
\label{eq:prob}
\end{align}
By \cite[Prop.~2.1]{MSh12}, for any $j\in\N$,
$\norm{B^{H,N,n}_j-B^{H,N}_j}_{\alpha,0;T} \to 0$, $n\to\infty$, a.\,s.
Then, in view of the boundedness,
\begin{equation}\label{eq:conv-fbm}
\E\norm{B^{H,N,n}_j-B^{H,N}_j}_{\alpha,0;T}^2 \to 0, \quad n\to\infty.
\end{equation}
Since $B^{H,N,n}_j-B^{H,N}_j$, $j\in\N$, are identically distributed, we see that
\[
\E\sum_{j=1}^\infty\mu_j^{1/2}\norm{B^{H,N,n}_j-B^{H,N}_j}_{\alpha,0;T}^2 \to 0, \quad n\to\infty,
\]
by \ref{(A5)}.
Then, by the Cauchy--Schwarz inequality,
\begin{align}
\MoveEqLeft
\E\left(\sum_{j=1}^\infty\mu_j^{1/2}\norm{B^{H,N,n}_j-B^{H,N}_j}_{\alpha,0;T}\right)^2
\notag
\\
&\le \E\left[\sum_{j=1}^\infty\mu_j^{1/2}\norm{B^{H,N,n}_j-B^{H,N}_j}_{\alpha,0;T}^2\right] \left(\sum_{j=1}^\infty\mu_j^{1/2}\right)\to 0, \quad n\to\infty.
\label{eq:conv-fbm2}
\end{align}
Therefore, using Lemma~\ref{l:3} and Markov's inequality, we see that for all $\eps>0$, $R\ge1$,
\[
\Prob\left(\norm{u_{N,n}-u_{N,m}}_{\alpha,2,T}>\eps, \norm{u_{N,n}}_{\alpha,2,T}\le R, \norm{u_{N,m}}_{\alpha,2,T}\le R\right)
\to0,\quad n\to\infty,
\]
and
\[
\limsup_{n,m\to\infty}\Prob\left(\norm{u_{N,n}-u_{N,m}}_{\alpha,2,T}>\eps\right)
\le2\sup_{n\in\N} \Prob\left(\norm{u_{N,n}}_{\alpha,2,T}>R\right),
\]
by \eqref{eq:prob}.

Moreover, it follows from the convergence~\eqref{eq:conv-fbm} that
$\sup_{n\in\N}\E\norm{B^{H,N,n}_j}_{\alpha,0;T}^2<\infty$.
Hence, Lemma~\ref{l:2} and Markov's inequality imply that
\[
\sup_{n\in\N} \Prob\left(\norm{u_{N,n}}_{\alpha,2,T}>R\right)\to0,
\quad R\to\infty.
\]
Therefore, $\norm{u_{N,n}-u_{N,m}}_{\alpha,2,T}\to0$, $n,m\to\infty$, in probability.
Consequently, there exists a random process $u_{N}$ such that
$\norm{u_{N,n}-u_{N}}_{\alpha,2,T}\to0$, $n\to\infty$, in probability.
Then there exists an a.\,s.\ convergent subsequence, and without loss of generality we can assume that
\begin{equation}\label{eq:conv-sol}
\norm{u_{N,n}-u_{N}}_{\alpha,2,T}\to0,\quad n\to\infty, \text{ a.\,s.}
\end{equation}

\subsection{The limit provides a solution}
We have
\begin{align*}
\norm{u_{N,n}(\cdot,t)-u_{N}(\cdot,t)}_2
&\le C\left(\norm{\tilde I_{\Delta f}(\cdot,t)}_2^2 + \norm{\tilde I_{\Delta g}(\cdot,t)}_2^2 + \norm{\tilde I_{\Delta h}(\cdot,t)}_2^2
+\norm{\tilde I_{\Delta Z}(\cdot,t)}_2^2\right),
\end{align*}
where
\begin{align*}
\tilde I_{\Delta f}(x,t) &\coloneqq \int_0^t\int_D G(x,t;y,s)\bigl(f(u_{N,n}(y,s))-f(u_{N}(y,s))\bigr)\,dy\,ds,
\\
\tilde I_{\Delta g}(x,t) &\coloneqq \sum_{j=1}^\infty \lambda_j^{1/2} \int_0^t \int_D G(x,t;y,s) \bigl(g(u_{N,n}(y,s))-g(u_{N}(y,s))\bigr)e_j(y)\,dy\,dB_j(s),
\\
\tilde I_{\Delta h}(x,t) &\coloneqq \sum_{j=1}^\infty \mu_j^{1/2} \int_0^t \int_D G(x,t;y,s)\bigl(h(u_{N,n}(y,s))-h(u_{N}(y,s))\bigr)e_j(y)\,dy\,dB^{H,N,n}_j(s),
\\
\tilde I_{\Delta Z}(x,t) &\coloneqq \sum_{j=1}^\infty \mu_j^{1/2} \int_0^t \int_D G(x,t;y,s)h(u_{N}(y,s))e_j(y)\,dy\,\bigl(B^{H,N,n}_j-B^{H,N}_j\bigr)(ds).
\end{align*}
Consequently, in order to prove that $u_{N}$ satisfies \eqref{eq:mild}
with $B^H_j$ replaced by $B^{H,N}_j$, we need to show that these four integrals converge to zero.
Note that they can be bounded exactly in the same way as the corresponding integrals in Lemma~\ref{l:3} of Appendix \ref{sec:aux}. Denoting
\[
\ind_T=\ind_{\set{\norm{u_{N,n}}_{\alpha,2,T}\le R, \norm{u_{N}}_{\alpha,2,T}\le R}},
\]
we will obtain
\begin{align*}
\norm{\tilde I_{\Delta f}(\cdot,T)}^2 &\le C \int_0^T \norm{u_{N,n}-u_{N}}_{\alpha,2,s}^2 ds,
\\
\norm{\tilde I_{\Delta h}(\cdot,T)}^2 &\le C_N\int_0^T\norm{u_{N,n}-u_{N}}_{\alpha,2,s}^2ds,
\\
\norm{\tilde I_{\Delta Z}(\cdot,T)}\ind_T &\le
C R \sum_{j=1}^\infty\mu_j^{1/2}\norm{B^{H,N,n}_j-B^{H,N}_j}_{\alpha,0;T},
\end{align*}
and
\[
\E\left[\norm{\tilde I_{\Delta g}(\cdot,T)}_2^2 \ind_T\right]
\le C  \int_0^T\E\left[\norm{u_{N,n}-u_{N}}_{\alpha,2,s}^2\ind_s\right] ds.
\]
Taking into account \eqref{eq:conv-fbm2} and \eqref{eq:conv-sol}, we get that
$\E\norm{u_{N,n}(\cdot,T)-u_{N}(\cdot,T)}_2^2\ind_T\to0$ as $n\to\infty$, and, consequently, $\norm{u_{N,n}(\cdot,T)-u_{N}(\cdot,T)}_2\ind_T\to0$  in probability.
Thanks to the convergence
$\norm{u_{N,n}-u_{N}}_{\alpha,2,T}\to0$, $n\to\infty$, the event
$\set{\norm{u_{N}}_{\alpha,2,T}\le R}$
implies
$\set{\norm{u_{N,n}}_{\alpha,2,T}\le R}$
for $n$ large enough, therefore we have the convergence of the integrals in probability on
$\set{\norm{u_{N}}_{\alpha,2,T}\le R}$
and arbitrary $R \ge 1$, therefore
on $\Omega$.

\subsection{Letting $N\to\infty$ and uniqueness}
From Lemma \ref{l:3} and Remark \ref{rem:l3} after it, it is obvious that the processes $u_{N}$ and $u_{M}$ with $M \ge N$ coincide
a.\,s.\ on the set
$A_{N,T}=\set{\xi_{\alpha,H,T}\le N}$.
Therefore, there exists a process $u$ such that
for each $N \ge 1$, $u_N = u$ a.\,s.\ on $A_{N,T}$.
Consequently, $u$ satisfies \eqref{eq:mild} on each of the sets
$A_{N,T}$, $N\ge1$, hence, almost surely.

Finally, the uniqueness also follows from Lemma \ref{l:3}: each solution to \eqref{eq:mild} must coincide
with $u$ on each of the sets $A_{N,T}$, hence, almost surely.

\appendix
\section{Auxiliary results}
\label{sec:aux}

Let $u$ be a mild solution, defined by \eqref{eq:mild}.
Introduce the following notation:
\begin{align*}
I_0(x,t) &\coloneqq \int_D G(x,t;y,0)\varphi(y)\,dy,
\\
I_f(x,t) &\coloneqq \int_0^t\int_D G(x,t;y,s)f(u(y,s))\,dy\,ds,
\\
I_g(x,t) &\coloneqq \sum_{j=1}^\infty \lambda_j^{1/2} \int_0^t \int_D G(x,t;y,s)g(u(y,s))e_j(y)\,dy\,dB_j(s),
\\
I_h(x,t) &\coloneqq \sum_{j=1}^\infty \mu_j^{1/2} \int_0^t \int_D G(x,t;y,s)h(u(y,s))e_j(y)\,dy\,dB^H_j(s).
\end{align*}
Also, let
\begin{equation}\label{eq:J-def}
J_a(t) \coloneqq \int_0^t \left( \int_0^s \frac{\norm{I_a(\cdot,s)-I_a(\cdot,v)}_2}{(s-v)^{\alpha+1}}\,dv \right )^2 ds,
\quad a\in\set{0,f,g,h}.
\end{equation}
Then
\[
\norm{I_a}_{\alpha,2,t}
= \sup_{s\in[0,t]}\norm{I_a(\cdot,s)}_{2}^2
+J_a(t),
\quad a\in\set{0,f,g,h}.
\]

\begin{lemma}
\label{l:1}
Let $N\ge1$. Define
\[
A_{N,t}:=\set{\xi_{\alpha,H,t}\le N}, \quad t\in[0,T],
\]
where $\xi_{\alpha,H,t}$ is given by \eqref{eq:xi}.
Then under assumptions of Theorem~\ref{th:mild},
\[
\norm{u}_{\alpha,2,t}^2 \le C_N\left(1+\int_0^t\norm{u}_{\alpha,2,s}^2ds
+ \sup_{s\in[0,t]}\norm{I_g(\cdot,s)}_2^2+J_g(t)\right)
\]
for all $\omega\in A_{N,t}$.
\end{lemma}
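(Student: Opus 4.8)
The plan is to insert the mild representation \eqref{eq:mild} and split $u = I_0 + I_f + I_g + I_h$, then estimate the four pieces separately in the $\norm{\,\cdot\,}_{\alpha,2,t}$ norm. Since $(a+b+c+d)^2 \le 4(a^2+b^2+c^2+d^2)$, it suffices to bound $\norm{I_0}_{\alpha,2,t}^2$, $\norm{I_f}_{\alpha,2,t}^2$ and (on $A_{N,t}$) $\norm{I_h}_{\alpha,2,t}^2$, while keeping the $I_g$ contribution $\sup_{s\le t}\norm{I_g(\cdot,s)}_2^2 + J_g(t)$ untouched on the right-hand side. Each norm splits, via $\norm{I_a}_{\alpha,2,t}^2 = \sup_{s\le t}\norm{I_a(\cdot,s)}_2^2 + J_a(t)$, into a supremum part and an increment part $J_a(t)$ from \eqref{eq:J-def}, and I would treat the two parts of each term by hand.

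For the deterministic initial-value term $I_0(x,t) = U(t,0)\varphi(x)$, contractivity of $U(t,s)$ on $L^2(D)$ bounds the supremum part by $C\norm{\varphi}_2^2$; since $\varphi \in C^{2+\beta}(\oD)$ satisfies the boundary condition \ref{(A4)}, the map $t \mapsto U(t,0)\varphi$ is Lipschitz in $L^2(D)$, so the inner integral defining $J_0$ converges (as $\alpha < 1$) and $J_0(t) \le C$, giving $\norm{I_0}_{\alpha,2,t}^2 \le C$. For the drift term $I_f$, contractivity together with the linear-growth bound $\abs{f(u)} \le C(1+\abs{u})$ from \ref{(A2)} and $\norm{u(\cdot,s)}_2 \le \norm{u}_{\alpha,2,s}$ yields, after Cauchy--Schwarz, $\sup_{s\le t}\norm{I_f(\cdot,s)}_2^2 \le C\big(1 + \int_0^t \norm{u}_{\alpha,2,s}^2\,ds\big)$. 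For $J_f$ I would write $I_f(\cdot,t) - I_f(\cdot,v)$ as an ``old mass'' part $\int_0^v (U(t,s)-U(v,s))f(u(\cdot,s))\,ds$, estimated through the time-difference bound \eqref{eq:deltaG}, plus a ``new mass'' part $\int_v^t U(t,s)f(u(\cdot,s))\,ds$, estimated through the Gaussian property \eqref{eq:gaus}; the weight $(s-v)^{-\alpha-1}$ is absorbed by the exponents in \eqref{eq:deltaG}, producing $J_f(t) \le C\big(1 + \int_0^t \norm{u}_{\alpha,2,s}^2\,ds\big)$.

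The main obstacle is the fractional-noise term $I_h$, and here the whole argument takes place on $A_{N,t}$. Applying the pathwise bound \eqref{eq:bound-int-2} with $\Phi(s)\psi(s) = U(t,s)\big(h(u(\cdot,s))\,\cdot\,\big)$, the prefactor $\xi_{\alpha,H,t}$ is at most $N$ on $A_{N,t}$, which is exactly where the constant $C_N$ and the restriction to $A_{N,t}$ enter. Since $h$ is affine by \ref{(A3)}, both $\abs{h(u)}$ and the coefficient increments $\abs{h(u(y,s)) - h(u(y,v))}$ are controlled linearly by $\abs{u}$ and by $\abs{u(y,s)-u(y,v)}$. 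The integrand of \eqref{eq:bound-int-2} requires bounding $\norm{U(t,s)(h(u(\cdot,s))e_j) - U(t,v)(h(u(\cdot,v))e_j)}_2$, which I would split into the contribution of the coefficient increment $h(u(\cdot,s)) - h(u(\cdot,v))$ (handled by \eqref{eq:gaus} and the Lipschitz bound on $h$) and the contribution of the operator difference $U(t,s) - U(t,v)$ (controlled by \eqref{eq:deltaG1}). Summing over $j$ with $\sup_j\norm{e_j}_\infty < \infty$ and $\sum_j \mu_j^{1/2} < \infty$ from \ref{(A5)}, and reducing everything to space--time integrals of Gaussian kernels that converge precisely because $\delta > d/(d+2)$ and $\alpha < 1/(d+2)$, yields the supremum part. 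The increment part $J_h$ is the heaviest computation: controlling $\norm{I_h(\cdot,s)-I_h(\cdot,v)}_2$ inside the outer fractional integral produces double time-differences of $G$ that are exactly what the three-point estimate \eqref{eq:deltaG2} is designed to bound, and gives $\norm{I_h}_{\alpha,2,t}^2 \le C_N\big(1 + \int_0^t \norm{u}_{\alpha,2,s}^2\,ds\big)$ on $A_{N,t}$. Collecting the four contributions and absorbing all constants into $C_N$ delivers the claimed inequality.
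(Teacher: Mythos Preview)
Your proposal is correct and follows essentially the same route as the paper: decompose $u = I_0 + I_f + I_g + I_h$, bound each piece in the sup-part and the increment part $J_a$ separately, keep the $I_g$ contribution untouched, and use the Green's function estimates \eqref{eq:gaus}, \eqref{eq:deltaG1}, \eqref{eq:deltaG}, \eqref{eq:deltaG2} together with the pathwise bound \eqref{eq:bound-int-2} on $A_{N,t}$ for the fractional term. The only noteworthy divergence is your treatment of $J_0$: you invoke Lipschitz continuity of $t\mapsto U(t,0)\varphi$ coming from the smoothness of $\varphi$ in \ref{(A4)}, whereas the paper instead applies the kernel estimate \eqref{eq:deltaG} directly to $G(x,s;y,0)-G(x,v;y,0)$, picking up a harmless $v^{-\delta}$ singularity; both arguments work, and yours is arguably cleaner here.
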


\begin{proof}
Fix $\omega\in A_{N,t}$.
It follows from \eqref{eq:mild} that
\begin{equation}\label{eq:u2}
\norm{u(\cdot,t)}_2^2 \le C\left(\norm{I_0(\cdot,t)}_2^2 + \norm{I_f(\cdot,t)}_2^2 + \norm{I_g(\cdot,t)}_2^2 + \norm{I_h(\cdot,t)}_2^2\right).
\end{equation}
Evidently,
\begin{equation}\label{eq:I02}
\norm{I_0(\cdot,s)}_{2}^2
= \int_D\left(\int_D G(x,s;y,0)\varphi(y)\,dy\right)^2dx
\le C,
\end{equation}
since $\varphi$ is bounded and the Gaussian property \eqref{eq:gaus} holds.
Using the Schwarz inequality and \eqref{eq:gaus}, we also get
\begin{align}
\norm{I_f(\cdot,t)}_2^2
&=\int_D\left(\int_0^t \int_D \abs{G(x,t;y,s)f(u(y,s))}dy\,ds\right)^2dx
\notag
\\
&\le C \int_D \left(\int_0^t \!\!\int_D \abs{G(x,t;y,s)}dy\,ds\right)\! \left(\int_0^t\!\! \int_D \abs{G(x,t;y,s)}\abs{f(u(y,s))}^2dy\,ds\right)dx
\notag
\\
&\le C \int_D \int_0^t \int_D \abs{G(x,t;y,s)}\abs{f(u(y,s))}^2 dy\,ds\,dx
\notag
\\
&\le C \int_0^t\int_D\left(\int_D \abs{G(x,t;y,s)}\,dx\right)\left(1+\abs{u(y,s)}^2\right) dy\,ds
\notag
\\
&\le C \int_0^t \int_D\left(1+\abs{u(y,s)}^2\right) dy\,ds
\le C \left(1 + \int_0^t \norm{u(\cdot,s)}_2^2ds \right).
\label{eq:If2}
\end{align}

Define
\begin{equation}\label{eq:f}
a_{j,t}(u)(x,s)\coloneqq \int_D G(x,t;y,s)h(u(y,s))e_j(y)\,dy.
\end{equation}
Then applying \eqref{eq:bound-int-2}, we obtain
\begin{align*}
\MoveEqLeft
\norm{I_h(\cdot,t)}_2 = \norm{\sum_{j=1}^\infty \mu_j^{1/2} \int_0^t a_{j,t}(u)(\cdot,s)\,dB^H_j(s)}_2
\\
&\le C \xi_{\alpha,H,t}\sup_{j\in\N} \int_0^t\left( \frac{\norm{a_{j,t}(u)(\cdot,s)}_2}{s^\alpha}
+\int_0^s \frac{\norm{a_{j,t}(u)(\cdot,s)-a_{j,t}(u)(\cdot,v)}_2}{(s-v)^{\alpha+1}}\,dv\right)ds.
\end{align*}
Taking into account that $\sup_{j\in\N}\norm{e_j}_\infty<\infty$, one can derive the following bound similarly to \eqref{eq:If2}:
\begin{equation}\label{eq:f-bound}
\sup_{j\in\N}\norm{a_{j,t}(u)(\cdot,s)}_2 \le C (1+\norm{u(\cdot,s)}_2).
\end{equation}
Further, by the assumption \ref{(A3)},
\begin{align*}
\MoveEqLeft
\abs{a_{j,t}(u)(x,s)-a_{j,t}(u)(x,v)}
\le C\int_D \abs{G(x,t;y,s)}\abs{u(y,s)-u(y,v)}\,dy
\\*
&\quad+C\int_D \abs{G(x,t;y,s)-G(x,t;y,v)}(1+\abs{u(y,v)})\,dy.
\end{align*}
Applying the Schwarz inequality, we obtain
\begin{align*}
\MoveEqLeft
\abs{a_{j,t}(u)(x,s)-a_{j,t}(u)(x,v)}^2
\le C\int_D \abs{G(x,t;y,s)}\abs{u(y,s)-u(y,v)}^2\,dy
\\*
&\quad+C\int_D \abs{G(x,t;y,s)-G(x,t;y,v)}(1+\abs{u(y,v)})^2\,dy,
\end{align*}
since the integrals $\int_D \abs{G(x,t;y,s)}dy$ and
$\int_D \abs{G(x,t;y,s)-G(x,t;y,v)}dy$ are bounded uniformly in $s,v$ due to the Gaussian property \eqref{eq:gaus}.
Then applying the bounds \eqref{eq:gaus} and \eqref{eq:deltaG1} and integrating the preceding estimate w.\,r.\,t.\ $x\in D$, we get
\begin{align*}
\sup_{j\in\N}\norm{a_{j,t}(u)(\cdot,s)-a_{j,t}(u)(\cdot,v)}^2_2
&\le C\Bigl(\norm{u(\cdot,s)-u(\cdot,v)}^2_2
\notag
\\*
&\quad+(t-s)^{-\delta} (s-v)^{\delta} \left(1 + \norm{u(\cdot,v)}^2_2\right)\Bigr),
\end{align*}
whence
\begin{align}
\sup_{j\in\N}\norm{a_{j,t}(u)(\cdot,s)-a_{j,t}(u)(\cdot,v)}_2
&\le C\Bigl(\norm{u(\cdot,s)-u(\cdot,v)}_2
\notag
\\*
&\quad+(t-s)^{-\frac\delta2} (s-v)^{\frac\delta2} \left(1 + \norm{u(\cdot,v)}_2\right)\Bigr).
\label{eq:df-bound}
\end{align}
for any $\delta\in\left(\frac{d}{d+2},1\right)$.
Therefore,
\begin{align}
\norm{I_h(\cdot,t)}_2 &\le C \xi_{\alpha,H,t} \left(1 + \int_0^t\frac{\norm{u(\cdot,s)}_2}{s^\alpha}\,ds
+ \int_0^t\int_0^s \frac{\norm{u(\cdot,s)-u(\cdot,v)}_2}{(s-v)^{\alpha+1}}\,dv\,ds
\right.\notag
\\
&\quad+\left. \int_0^t (t-s)^{-\delta/2} \int_0^s (s-v)^{\delta/2-\alpha-1} \left(1 + \norm{u(\cdot,v)}_2\right) \,dv\,ds \right),
\label{eq:Ih0}
\end{align}
where $\delta\in\left(\frac{d}{d+2},1\right)$ is arbitrary.
Since $\alpha<1/2$, we can choose $\delta>2\alpha$. Then by changing the order of integration, the last term can be calculated as follows
\begin{align}
\MoveEqLeft
\int_0^t (t-s)^{-\delta/2} \int_0^s (s-v)^{\delta/2-\alpha-1} \left(1 + \norm{u(\cdot,v)}_2\right) \,dv\,ds
\notag
\\*
&=\int_0^t \left(\int_v^t (t-s)^{-\delta/2}  (s-v)^{\delta/2-\alpha-1}ds\right) \left(1 + \norm{u(\cdot,v)}_2\right) \,dv
\notag
\\
&=\int_0^t (t-v)^{-\alpha}\left(\int_0^1 (1-z)^{-\delta/2}  z^{\delta/2-\alpha-1}dz\right) \left(1 + \norm{u(\cdot,v)}_2\right) \,dv
\notag
\\
&= \Beta\left(\tfrac\delta2-\alpha,1-\tfrac\delta2\right) \int_0^t (t-v)^{-\alpha} \left(1 + \norm{u(\cdot,v)}_2\right) \,dv.
\label{eq:calc-int}
\end{align}
We arrive at
\begin{align*}
\norm{I_h(\cdot,t)}_2 &\le C N \left(1 + \int_0^t\left(\frac{1}{s^\alpha}+\frac{1}{(t-s)^\alpha}\right)\norm{u(\cdot,s)}_2\,ds
\right.
\\
&\quad+\left. \int_0^t\int_0^s \frac{\norm{u(\cdot,s)-u(\cdot,v)}_2}{(s-v)^{\alpha+1}}\,dv\,ds\right).
\end{align*}
Applying the Schwarz inequality relative to the measure $ds$ on $[0,t]$ to both integrals, we deduce that
\begin{equation}\label{eq:Ih2}
\norm{I_h(\cdot,t)}_2^2 \le C N^2 \left(1 + \int_0^t\norm{u(\cdot,s)}_2^2\,ds
+ \int_0^t\left(\int_0^s \frac{\norm{u(\cdot,s)-u(\cdot,v)}_2}{(s-v)^{\alpha+1}}\,dv\right)^2ds\right).
\end{equation}

Combining \eqref{eq:u2}--\eqref{eq:If2} and \eqref{eq:Ih2}, we obtain
\begin{align*}
\norm{u(\cdot,t)}_2^2 &\le C N^2 \left(1 + \int_0^t\norm{u(\cdot,s)}_2^2\,ds
+ \int_0^t\left(\int_0^s \frac{\norm{u(\cdot,s)-u(\cdot,v)}_2}{(s-v)^{\alpha+1}}\,dv\right)^2ds\right)
\\
&\quad+C \norm{I_g(\cdot,t)}_2^2
\\
&\le C N^2 \left(1 + \int_0^t\sup_{v\in[0,s]}\norm{u(\cdot,v)}_2^2\,ds
+ \int_0^t\left(\int_0^s \frac{\norm{u(\cdot,s)-u(\cdot,v)}_2}{(s-v)^{\alpha+1}}\,dv\right)^2ds\right)
\\
&\quad+C \norm{I_g(\cdot,t)}_2^2.
\end{align*}
By definition of the norm $\norm{\,\cdot\,}_{\alpha,2,t}$,
\begin{align}
\norm{u(\cdot,t)}_{\alpha,2,t}^2
&\le C N^2 \left(1 + \int_0^t\sup_{v\in[0,s]}\norm{u(\cdot,v)}_2^2\,ds
+ \int_0^t\left(\int_0^s \frac{\norm{u(\cdot,s)-u(\cdot,v)}_2}{(s-v)^{\alpha+1}}\,dv\right)^2ds\right)
\notag
\\
&\quad+C \sup_{s\in[0,t]}\norm{I_g(\cdot,s)}_2^2.
\label{eq:bound1}
\end{align}

Obviously,
\begin{equation}\label{eq:term2}
\int_0^t\left(\int_0^s \frac{\norm{u(\cdot,s)-u(\cdot,v)}_2}{(s-v)^{\alpha+1}}\,dv\right)^2ds
\le C\bigl(J_0(t)+J_f(t)+J_g(t)+J_h(t)\bigr),
\end{equation}
where the terms in the right-hand side are defined in \eqref{eq:J-def}.

Using the boundedness of $\varphi$ and \eqref{eq:deltaG}, we get for any $\delta\in(\frac{d}{d+2},1)$ and some $v^*\in(v,s)$,
\begin{align*}
\norm{I_0(\cdot,s)-I_0(\cdot,v)}_2^2
&\le C \int_D\left(\int_D \abs{G(x,s;y,0)-G(x,v;y,0)}\,dy\right)^2dx
\\
&\le C \int_D\left( (s-v)^\delta v^{-\delta} \int_D (v^*)^{-d/2} \exp\set{-C\frac{\abs{x-y}^2}{v^*}}\,dy\right)^2dx
\\
&\le C  (s-v)^{2\delta} v^{-2\delta},
\end{align*}
whence
\begin{equation}\label{eq:J0}
J_0(t)\le C\int_0^t \left( \int_0^s (s-v)^{\delta-\alpha-1} v^{-\delta}\,dv \right )^2 ds \le C,
\end{equation}
since we can choose $\delta>\alpha$.


In order to estimate $J_f(t)$, we write
\begin{align*}
I_f(x,s)-I_f(x,v) &= \int_v^s\int_D G(x,s;y,z)f(u(y,z))\,dy\,dz
\\
&\quad+ \int_0^v\int_D \bigl(G(x,s;y,z)-G(x,v;y,z)\bigr) f(u(y,z))\,dy\,dz
\\
&\eqqcolon K'_f(x,s,v)+K''_f(x,s,v).
\end{align*}
Then,
\[
J_f(t) \le \int_0^t \left( \int_0^s \left(\norm{K'_f(\cdot,s,v)}_2+\norm{K''_f(\cdot,s,v)}_2\right)(s-v)^{-\alpha-1}\,dv \right )^2 ds.
\]
It is not hard to show that
\begin{align}
\norm{K'_f(\cdot,s,v)}_2 &\le C(s-v)^{1/2}\left(\int_v^s\left(1+\norm{u(\cdot,z)}_2^2\right)dz\right)^{1/2},
\label{eq:Kf'}
\\
\norm{K''_f(\cdot,s,v)}_2 &\le C(s-v)^{\delta/2}\left(\int_0^v (v-z)^{-\delta}\left(1+\norm{u(\cdot,z)}_2^2\right)dz\right)^{1/2},
\label{eq:Kf''}
\end{align}
for every $\delta\in\left(\frac{d}{d+2},1\right)$.
Indeed, \eqref{eq:Kf'} follows by applying the Schwarz inequality w.\,r.\,t.\ the finite measure $\abs{G(x,s;y,z)}dy\,dz$ on $D\times[v,s]$ and using the Gaussian property along with assumption \ref{(A2)}.
Inequality \eqref{eq:Kf''} is derived by applying
the Schwarz inequality w.\,r.\,t.\ the measure $\abs{G(x,s;y,z)-G(x,v;y,z)}dy\,dz$ on $D\times[0,v]$ and then \eqref{eq:deltaG}.
Further, \eqref{eq:Kf'} implies
\begin{align*}
\MoveEqLeft
\int_0^t \left( \int_0^s \norm{K'_f(\cdot,s,v)}_2 (s-v)^{-\alpha-1}\,dv \right )^2 ds
\\
&\le
C \int_0^t \left( \int_0^s (s-v)^{-1/2-\alpha} \left(\int_v^s\left(1+\norm{u(\cdot,z)}_2^2\right)dz\right)^{1/2}dv\right )^2 ds
\\
&\le
C \int_0^t \left(1+\sup_{z\in[0,s]}\norm{u(\cdot,z)}_2^2\right) \left( \int_0^s (s-v)^{-1/2-\alpha} dv\right )^2 ds
\\
&\le
C \int_0^t \left(1+\sup_{z\in[0,s]}\norm{u(\cdot,z)}_2^2\right)ds,
\end{align*}
since $\alpha<1/2$.
Applying the bound \eqref{eq:Kf''} and the Schwarz inequality, we obtain
\begin{align*}
\MoveEqLeft
\int_0^t \left( \int_0^s \norm{K''_f(\cdot,s,v)}_2 (s-v)^{-\alpha-1}\,dv \right )^2 ds
\\
&\le
C \int_0^t \left( \int_0^s (s-v)^{\delta/2-\alpha-1}\left(\int_0^v (v-z)^{-\delta}\left(1+\norm{u(\cdot,z)}_2^2\right)dz\right)^{1/2}dv \right )^2 ds
\\
&\le
C \int_0^t \left( \int_0^s (s-v)^{\delta/2-\alpha-1}dv\right)
\\*
&\quad\times\left( \int_0^s (s-v)^{\delta/2-\alpha-1}\int_0^v (v-z)^{-\delta}\left(1+\norm{u(\cdot,z)}_2^2\right)dz\,dv \right ) ds
\\
&\le
C \int_0^t \int_0^s (s-v)^{\delta/2-\alpha-1}\int_0^v (v-z)^{-\delta}\left(1+\norm{u(\cdot,z)}_2^2\right)dz\,dv\,ds
\\
&\le
C \int_0^t \left(1+\sup_{z\in[0,s]}\norm{u(\cdot,z)}_2^2\right)\int_0^s (s-v)^{\delta/2-\alpha-1}v^{1-\delta}\,dv\,ds
\\
&\le
C \int_0^t \left(1+\sup_{z\in[0,s]}\norm{u(\cdot,z)}_2^2\right)ds,
\end{align*}
where the last inequality follows from
\[
\int_0^s (s-v)^{\delta/2-\alpha-1}v^{1-\delta}\,dv
=\Beta(\delta/2-\alpha,2-\delta) s^{1-\delta/2-\alpha}
\le\Beta(\delta/2-\alpha,2-\delta) T^{1-\delta/2-\alpha},
\]
since $\delta/2+\alpha<1$.
Combining the above estimates, we arrive at
\begin{equation}\label{eq:Jf}
J_f(t)\le C\left(1+\int_0^t\norm{u}_{\alpha,2,s}^2ds\right).
\end{equation}

It remains to estimate $J_h$.
We start by writing
\begin{align*}
I_h(x,s)-I_h(x,v)
&=\sum_{j=1}^\infty \mu_j^{1/2} \int_v^s a_{j,s}(u)(x,z)\,dB^H_j(z)
\\
&\quad+ \sum_{j=1}^\infty \mu_j^{1/2} \int_0^v \left(a_{j,s}(u)(x,z)-a_{j,v}(u)(x,z)\right)dB^H_j(z).
\\
&= K'_h(x,s,v)+K''_h(x,s,v).
\end{align*}
where $a_{j,s}(u)(x,z)$ is defined by \eqref{eq:f}.
Then
\begin{equation}\label{eq:Jh1}
J_h(t) \le \int_0^t \left( \int_0^s \left(\norm{K'_h(\cdot,s,v)}_2+\norm{K''_h(\cdot,s,v)}_2\right)(s-v)^{-\alpha-1}\,dv \right )^2 ds.
\end{equation}
Applying \eqref{eq:bound-int-2}, \eqref{eq:f-bound} and \eqref{eq:df-bound}, we get
\begin{align*}
\norm{K'_h(\cdot,s,v)}_2
&\le C \xi_{\alpha,H,t}\sup_{j\in\N} \int_v^s\biggl( \frac{\norm{a_{j,s}(u)(\cdot,z)}_2}{(z-v)^\alpha}
\\
&\quad+\int_v^z \frac{\norm{a_{j,s}(u)(\cdot,z)-a_{j,s}(u)(\cdot,r)}_2}{(z-r)^{\alpha+1}}\,dr\biggr)dz
\\
&\le C \xi_{\alpha,H,t}\sup_{j\in\N} \biggl( \int_v^s\frac{1+\norm{u(\cdot,z)}_2}{(z-v)^\alpha}dz
+\int_v^s\!\!\int_v^z \frac{\norm{u(\cdot,z)-u(\cdot,r)}_2}{(z-r)^{\alpha+1}}\,dr\,dz
\\
&\quad+\int_v^s\!\!\int_v^z (s-z)^{-\delta/2} (z-r)^{\delta/2-\alpha-1} \left(1+\norm{u(\cdot,r)}_2\right)dr\,dz
\biggr).
\end{align*}
for any $\delta\in\left(\frac{d}{d+2},1\right)$.
The last term is computed similarly to \eqref{eq:calc-int} (recall that $\delta>2\alpha$):
\begin{align*}
\MoveEqLeft
\int_v^s\!\!\int_v^z (s-z)^{-\delta/2} (z-r)^{\delta/2-\alpha-1} \left(1+\norm{u(\cdot,r)}_2\right)dr\,dz
\\*
&= \Beta\left(\tfrac\delta2-\alpha,1-\tfrac\delta2\right) \int_s^v (s-r)^{-\alpha} \left(1 + \norm{u(\cdot,r)}_2\right) \,dr.
\end{align*}
Thus,
\begin{align}
\norm{K'_h(\cdot,s,v) }_2 &\le C \xi_{\alpha,H,t}
\left(\int_v^s\left(\frac{1}{(z-v)^\alpha}+\frac{1}{(s-z)^\alpha}\right)(1+\norm{u(\cdot,z)}_2)\,dz
\right.
\notag
\\*
&\quad+\left. \int_v^s\int_v^z \frac{\norm{u(\cdot,z)-u(\cdot,r)}_2}{(z-r)^{\alpha+1}}\,dr\,dz\right).
\label{eq:Kh'}
\end{align}

Denoting
$a^*_{j,s,v}(u)(x,z)=a_{j,s}(u)(x,z)-a_{j,v}(u)(x,z)$,
we can write by \eqref{eq:bound-int-2},
\begin{align*}
\norm{K''_h(\cdot,s,v)}_2
&\le C \xi_{\alpha,H,t}\sup_{j\in\N} \int_0^v\biggl( \frac{\norm{a^*_{j,s,v}(u)(\cdot,z)}_2}{z^\alpha}
\\*
&+\int_0^z \frac{\norm{a^*_{j,s,v}(u)(\cdot,z)-a^*_{j,s,v}(u)(\cdot,r)}_2}{(z-r)^{\alpha+1}}\,dr\biggr)dz.
\end{align*}

Similarly to \eqref{eq:f-bound} and \eqref{eq:df-bound}, we can prove the inequalities
\begin{equation}\label{eq:f*-bound}
\norm{a^*_{j,s,v}(u)(\cdot,z)}_2 \le C (s-v)^{\frac\delta2} (v-z)^{-\frac\delta2} (1+\norm{u(\cdot,z)}_2),
\end{equation}
and
\begin{align}
\norm{a^*_{j,s,v}(u)(\cdot,z)-a^*_{j,s,v}(u)(\cdot,r)}_2
&\le C(s-v)^{\frac\delta2} \Bigl( (v-z)^{-\frac\delta2} \norm{u(\cdot,z)-u(\cdot,r)}_2
\notag
\\
&\quad+(v-z)^{-\frac12}(z-r)^{-\frac12(1-\delta)} \left(1 + \norm{u(\cdot,r)}_2\right)\Bigr).
\label{eq:df*-bound}
\end{align}
for any $\delta\in\left(\frac{d}{d+2},1\right)$.
For \eqref{eq:f*-bound} the key estimate is \eqref{eq:deltaG}.
For \eqref{eq:df*-bound} one should apply \eqref{eq:deltaG} along with \eqref{eq:deltaG2}.

Then
\begin{align}
\norm{K''_h(\cdot,s,v) }_2 &\le C \xi_{\alpha,H,t}(s-v)^{\delta/2}
\left(\int_0^v(v-z)^{-\delta/2}\left(\frac{1}{z^\alpha}+\frac{1}{(v-z)^\alpha}\right)(1+\norm{u(\cdot,z)}_2)\,dz
\right.
\notag
\\*
&\quad+\left. \int_0^v(v-z)^{-\delta/2}\int_v^z \frac{\norm{u(\cdot,z)-u(\cdot,r)}_2}{(z-r)^{\alpha+1}}\,dr\,dz\right).
\label{eq:Kh''}
\end{align}

Thus, combining \eqref{eq:Jh1}, \eqref{eq:Kh'} and \eqref{eq:Kh''}, we get
\[
J_h(t)\le C N^2 \int_0^t \left[(L_1(s))^2 +(L_2(s))^2+(L_3(s))^2+(L_4(s))^2\right]ds,
\]
where
\begin{align*}
L_1(s) &= \int_0^s (s-v)^{-\alpha-1} \int_v^s \left(\frac{1}{(z-v)^\alpha} + \frac{1}{(s-z)^\alpha}\right) (1+\norm{u(\cdot,z)}_2)\,dz\,dv,
\\
L_2(s) &= \int_0^s (s-v)^{-\alpha-1} \int_v^s\int_v^z \frac{\norm{u(\cdot,z)-u(\cdot,r)}_2}{(z-r)^{\alpha+1}}\,dr\,dz\,dv,
\\
L_3(s) &= \int_0^s (s-v)^{\delta/2-\alpha-1}
\int_0^v (v-z)^{-\delta/2} \left(\frac{1}{z^\alpha} + \frac{1}{(v-z)^\alpha}\right) (1+\norm{u(\cdot,z)}_2)\,dz\,dv,
\\
L_4(s) &= \int_0^s (s-v)^{\delta/2-\alpha-1}
\int_0^v(v-z)^{-\delta/2}\int_v^z \frac{\norm{u(\cdot,z)-u(\cdot,r)}_2}{(z-r)^{\alpha+1}}\,dr\,dz\,dv.
\end{align*}
Similarly to Eqs.~(3.42)--(3.45) of \cite{SSV09}, one can estimate the integrals
$\int_0^t(L_i(s))^2\,ds$, $i=1,2,3,4$.
This leads to the inequality
\begin{equation}\label{eq:Jh}
J_h(t)\le CN^2\left(1+\int_0^t\norm{u}_{\alpha,2,s}^2ds\right).
\end{equation}
Finally, combining \eqref{eq:term2}, \eqref{eq:J0}, \eqref{eq:Jf} and \eqref{eq:Jh}, we get
\[
\int_0^t\left(\int_0^s \frac{\norm{u(\cdot,s)-u(\cdot,v)}_2}{(s-v)^{\alpha+1}}\,dv\right)^2ds
\le CN^2\left(1+\int_0^t\norm{u}_{\alpha,2,s}^2ds\right)+ CJ_g(t).
\]
Inserting this bound into \eqref{eq:bound1}, we conclude the proof.
\end{proof}

\begin{lemma}\label{l:2}
Let $N\ge1$. Define a stopping time
\[
\tau_N=\inf\set{t:\xi_{\alpha,H,t}\ge N}\wedge T
\]
and a stopped process
$u^N(\cdot,t)=u(\cdot,t\wedge\tau_N)$.
Then
\[
\E\left[\norm{u^N}_{\alpha,2,T}^2\right] \le C_N.
\]
\end{lemma}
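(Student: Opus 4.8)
The plan is to combine the pathwise a priori estimate of Lemma~\ref{l:1} with moment bounds for the It\^o-convolution term and then close the resulting inequality by Gronwall's lemma. Since $t\mapsto\xi_{\alpha,H,t}$ is nondecreasing, stopping at $\tau_N$ caps the fractional seminorm: $\xi_{\alpha,H,t\wedge\tau_N}\le N$ for every $t\in[0,T]$ and every $\omega$. Hence the stopped process $u^N$ solves the mild equation \eqref{eq:mild} with all drivers stopped at $\tau_N$, and the estimates in the proof of Lemma~\ref{l:1}, which use the fractional noise only through the bound $\xi_{\alpha,H,\cdot}\le N$, apply to $u^N$ on all of $\Omega$ rather than merely on $A_{N,t}$. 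This gives, almost surely,
\[
\norm{u^N}_{\alpha,2,t}^2 \le C_N\Bigl(1+\int_0^t\norm{u^N}_{\alpha,2,s}^2\,ds+\sup_{s\in[0,t]}\norm{I_g^N(\cdot,s)}_2^2+J_g^N(t)\Bigr),
\]
where $I_g^N$ and $J_g^N$ denote the quantities $I_g$ and $J_g$ built from $u^N$.

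Next I would estimate the two Wiener terms in expectation. For the supremum term the Burkholder-type inequality \eqref{eq:Burk} with $p=2$, together with the linear growth of $g$ from \ref{(A2)} and the elementary bound $\norm{u^N(\cdot,s)}_2^2\le\norm{u^N}_{\alpha,2,s}^2$, yields
\[
\E\sup_{s\in[0,t]}\norm{I_g^N(\cdot,s)}_2^2 \le C\,\E\int_0^t\norm{g(u^N(\cdot,s))}_2^2\,ds \le C\Bigl(1+\int_0^t\E\norm{u^N}_{\alpha,2,s}^2\,ds\Bigr).
\]
For $\E[J_g^N(t)]$ I would split the increment $I_g(\cdot,s)-I_g(\cdot,v)$ into the stochastic integral over $[v,s]$ with kernel $G(\cdot,s;y,z)$ and the integral over $[0,v]$ carrying the kernel difference $G(\cdot,s;y,z)-G(\cdot,v;y,z)$, exactly as $I_f$ and $I_h$ were decomposed in the proof of Lemma~\ref{l:1}. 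Applying the It\^o isometry to each piece and then the Gaussian bound \eqref{eq:gaus} and the increment bound \eqref{eq:deltaG}, one controls $\E\norm{I_g(\cdot,s)-I_g(\cdot,v)}_2^2$ by the same two types of terms encountered for $J_f$; integrating against $(s-v)^{-2\alpha-2}$ and changing the order of integration as in \eqref{eq:calc-int} produces
\[
\E[J_g^N(t)] \le C\Bigl(1+\int_0^t\E\norm{u^N}_{\alpha,2,s}^2\,ds\Bigr).
\]

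Taking expectations in the pathwise bound and inserting these two estimates, I obtain
\[
\E\norm{u^N}_{\alpha,2,t}^2 \le C_N\Bigl(1+\int_0^t\E\norm{u^N}_{\alpha,2,s}^2\,ds\Bigr),
\]
and Gronwall's inequality then gives $\E\norm{u^N}_{\alpha,2,T}^2\le C_N e^{C_N T}=:C_N$, which is the claim.

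The main obstacle is that Gronwall's inequality requires the a priori finiteness of $t\mapsto\E\norm{u^N}_{\alpha,2,t}^2$, which is not granted beforehand. I would resolve this by a secondary localization: set $\sigma_R=\inf\set{t:\norm{u^N}_{\alpha,2,t}\ge R}\wedge T$, run the argument above for the doubly stopped process $u^N(\cdot,\cdot\wedge\sigma_R)$, whose norm is bounded by $R$ so that all expectations are finite, obtain a bound $C_N$ independent of $R$, and let $R\to\infty$ via monotone convergence and Fatou's lemma. Technically the most delicate step is the regularity estimate for $\E[J_g^N]$, since it requires pairing the It\^o isometry with the Gaussian kernel-increment estimates \eqref{eq:gaus} and \eqref{eq:deltaG}; however, the integrals that arise have exactly the same form as those already handled for $J_f$ in Lemma~\ref{l:1}, so no genuinely new computation is needed.
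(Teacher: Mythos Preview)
Your proposal is correct and follows essentially the same route as the paper: apply Lemma~\ref{l:1} to the stopped process, bound $\E\sup_{s\le t}\norm{I_g^N(\cdot,s)}_2^2$ via \eqref{eq:Burk}, split the increment $I_g^N(\cdot,s)-I_g^N(\cdot,v)$ into the integral over $[v,s]$ and the one over $[0,v]$ with kernel difference, use the It\^o isometry together with \eqref{eq:gaus} and \eqref{eq:deltaG}, and close with Gronwall. The only differences are cosmetic: the paper handles $J_g^N$ via a Cauchy--Schwarz splitting with weights $(s-v)^{-3/2-\alpha}$ and $(s-v)^{-1/2-\alpha}$ (and needs $\delta>\tfrac12+\alpha$ for the $K''_g$ piece) rather than the weight $(s-v)^{-2\alpha-2}$ you mention, and it does not make explicit the secondary localization $\sigma_R$ that you correctly flag as needed for Gronwall.
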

\begin{proof}
By Lemma~\ref{l:1},
\begin{equation}\label{eq:l2a}
\E\left[\norm{u^N}_{\alpha,2,t}^2\right]
\le C_N\left(1+\int_0^t\E\left[\norm{u^N}_{\alpha,2,s}^2\right]ds
+ \E \left[\sup_{s\in[0,t]}\norm{I_g^N(\cdot,s)}_2^2\right]
+ \E \left[J_g^N(t)\right]\right),
\end{equation}
where
\begin{gather*}
I_g^N(x,t) = \sum_{j=1}^\infty \lambda_j^{1/2} \int_0^t \int_D G(x,t;y,s)g\left(u^N(y,s)\right)e_j(y)\,dy\,dB_j(s),
\\
J_g^N(t) = \int_0^t \left( \int_0^s \frac{\norm{I_g^N(\cdot,s)-I_g^N(\cdot,v)}_2}{(s-v)^{\alpha+1}}\,dv \right )^2 ds.
\end{gather*}

Using the inequality \eqref{eq:Burk} and then the assumption \ref{(A2)}, we get
\begin{align}
\E \sup_{s\in[0,t]}\norm{I_g^N(\cdot,s)}_2^2
&\le C \E \int_0^t \norm{g\left(u^N(\cdot,s)\right)}_2^2\,ds
\notag
\le C \E \int_0^t\left(1+\norm{u^N(\cdot,s)}_2^2\right)\,ds
\notag
\\
&\le C \left(1+\E \int_0^t \sup_{z\in[0,s]}\norm{u^N(\cdot,z)}_2^2\,ds \right)
\notag
\\
&\le C \left(1+ \int_0^t\E\left[\norm{u^N}_{\alpha,2,s}^2\right] ds\right).
\label{eq:l2b}
\end{align}

By the Schwarz inequality, we have
\begin{align*}
\E J_g^N(t) &\le \E \int_0^t \left( \int_0^s \norm{I_g^N(\cdot,s)-I_g^N(\cdot,v)}_2^2 (s-v)^{-3/2-\alpha}\,dv \right )\left( \int_0^s(s-v)^{-1/2-\alpha}\,dv \right ) ds
\\
&\le C \int_0^t  \int_0^s \E \norm{I_g^N(\cdot,s)-I_g^N(\cdot,v)}_2^2 (s-v)^{-3/2-\alpha}\,dv\, ds
\\
&\le C \int_0^t  \int_0^s
\left(\E\norm{K'_g(\cdot,s,v)}_2^2+\E \norm{K''_g(\cdot,s,v)}_2^2\right) (s-v)^{-3/2-\alpha}\,dv \,ds,
\end{align*}
where
\begin{align*}
K'_g(x,s,v) &= \sum_{j=1}^\infty \lambda_j^{1/2} \int_v^s \int_D G(x,s;y,z)g\left(u^N(y,z)\right)e_j(y)\,dy\,dB_j(z),
\\
K''_g(x,s,v) &= \sum_{j=1}^\infty \lambda_j^{1/2} \int_0^v \int_D \bigl(G(x,s;y,z)-G(x,v;y,z)\bigr)g\left(u^N(y,z)\right)e_j(y)\,dy\,dB_j(z).
\end{align*}
By Ito's isometry,
\begin{align}
\E\norm{K'_g(\cdot,s,v)}_2^2 &= \E\int_D\left(\sum_{j=1}^\infty \lambda_j^{1/2} \int_v^s \int_D G(x,s;y,z)g\left(u^N(y,z)\right)e_j(y)\,dy\,dB_j(z)\right)^2dx
\notag
\\
&=\E\int_D\sum_{j=1}^\infty \lambda_j \int_v^s\left( \int_D G(x,s;y,z)g\left(u^N(y,z)\right)e_j(y)\,dy\right)^2dz\,dx
\notag
\\
&\le\left(\sum_{j=1}^\infty \lambda_j\norm{e_j}_\infty\right)\E\int_v^s\int_D \left( \int_D \abs{G(x,s;y,z)g\left(u^N(y,z)\right)}\,dy\right)^2dx\,dz.
\label{eq:Kg'1}
\end{align}
Then using the assumption \ref{(A5)} and bounding the inner integrals similarly to \eqref{eq:If2}, we arrive at
\[
\E\norm{K'_g(\cdot,s,v)}_2^2
\le C (s-v)\left(1+\E\norm{u^N}_{\alpha,2,s}^2 \right).
\]
Therefore,
\begin{align*}
\MoveEqLeft
\int_0^t  \int_0^s \E \norm{K'_g(\cdot,s,v)}_2 (s-v)^{-3/2-\alpha}\,dv \,ds
\\
&\le C\int_0^t  \left(1+\E\norm{u^N}_{\alpha,2,s}^2\right) \int_0^s(s-v)^{-1/2-\alpha}\,dv \,ds
\le C \left(1+ \int_0^t\E\left[\norm{u^N}_{\alpha,2,s}^2\right] ds\right).
\end{align*}

Similarly to \eqref{eq:Kg'1},
\[
\E\norm{K''_g(\cdot,s,v)}_2^2
\le C\E\int_0^v\int_D \left( \int_D \abs{G(x,s;y,z)-G(x,v;y,z)}\abs{g\left(u^N(y,z)\right)}\,dy\right)^2dx\,dz.
\]
One can bound the integral in the right-hand side in the same way as $K''_f$ in \eqref{eq:Kf''}:
\[
\E\norm{K''_g(\cdot,s,v)}_2^2
\le
C\E(s-v)^{\delta}\int_0^v (v-z)^{-\delta}\left(1+\norm{u^N(\cdot,z)}_2^2\right)dz.
\]
Further,
\begin{align*}
\E\norm{K''_g(\cdot,s,v)}_2^2
&\le
C(s-v)^{\delta}\E\left(1+\sup_{z\in[0,s]}\norm{u^N(\cdot,z)}_2^2\right)\int_0^v (v-z)^{-\delta}dz
\\
&\le
C(s-v)^{\delta}\left(1+\E\norm{u^N}_{\alpha,2,s}^2\right)
\end{align*}
Hence, choosing $\delta>\frac12+\alpha$, we obtain
\begin{align*}
\MoveEqLeft
\int_0^t  \int_0^s \E \norm{K''_g(\cdot,s,v)}_2 (s-v)^{-3/2-\alpha}\,dv \,ds
\\
&\le C\int_0^t  \left(1+\E\norm{u^N}_{\alpha,2,s}^2\right) \int_0^s(s-v)^{\delta-\frac32-\alpha}\,dv \,ds
\le C \left(1+ \int_0^t\E\left[\norm{u^N}_{\alpha,2,s}^2\right] ds\right).
\end{align*}
Thus
\begin{equation}\label{eq:l2c}
\E J_g^N(t) \le C \left(1+ \int_0^t\E\left[\norm{u^N}_{\alpha,2,s}^2\right] ds\right).
\end{equation}

Combining \eqref{eq:l2a}, \eqref{eq:l2b}, and \eqref{eq:l2c}, we get
\[
\E\left[\norm{u^N}_{\alpha,2,t}^2\right]
\le C_N\left(1+\int_0^t\E\left[\norm{u^N}_{\alpha,2,s}^2\right]ds\right),
\]
and the proof follows from Gronwall's lemma.
\end{proof}

\begin{lemma}\label{l:3}
Let the assumptions of Theorem~\ref{th:mild} hold.
Then
\[
\E\norm{u_{N,n}-u_{N,m}}_{\alpha,2,T}^2\ind_{A_T^{N,R}} \le C_{N,R} \E\left(\sum_{j=1}^\infty\mu_j^{1/2}\norm{B^{H,N,n}_j-B^{H,N,m}_j}_{\alpha,0;T}\right)^2
\]
where
$A_T^{N,R}=\set{\norm{u_{N,n}}_{\alpha,2,T}\le R, \norm{u_{N,m}}_{\alpha,2,T}\le R}$,
$B^{H,N,n}_j$ and $u_{N,n}$ are defined by \eqref{eq:approx-B}--\eqref{eq:approx-u}.
\end{lemma}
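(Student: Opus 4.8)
The plan is to subtract the two mild equations \eqref{eq:approx-u} satisfied by $u_{N,n}$ and $u_{N,m}$, in which the deterministic leading term $\int_D G(\cdot,t;y,0)\varphi(y)\,dy$ cancels. Writing $v\coloneqq u_{N,n}-u_{N,m}$, I would decompose $v(\cdot,t)$ into four integrals, exactly as in the subsection ``The limit provides a solution'': a drift term built from $f(u_{N,n})-f(u_{N,m})$, an It\^o term built from $g(u_{N,n})-g(u_{N,m})$, and two fractional terms obtained by splitting each $j$-th fractional integral through the identity
\begin{align*}
\int_0^t a^{(n)}_j\,dB^{H,N,n}_j-\int_0^t a^{(m)}_j\,dB^{H,N,m}_j
&=\int_0^t\bigl(a^{(n)}_j-a^{(m)}_j\bigr)\,dB^{H,N,m}_j\\
&\quad+\int_0^t a^{(n)}_j\,d\bigl(B^{H,N,n}_j-B^{H,N,m}_j\bigr),
\end{align*}
where $a^{(n)}_j(\cdot,z)\coloneqq\int_D G(\cdot,t;y,z)h(u_{N,n}(y,z))e_j(y)\,dy$ as in \eqref{eq:f}, and $a^{(m)}_j$ is its analogue with $u_{N,m}$. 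This produces a term $I_{\Delta h}$ (common noise $B^{H,N,m}$, integrand difference) and a ``source'' term $I_{\Delta Z}$ (fixed integrand $h(u_{N,n})$, noise difference $B^{H,N,n}_j-B^{H,N,m}_j$), so that $\norm{v(\cdot,t)}_{\alpha,2,t}^2$ is bounded by the sum of the four corresponding $\norm{\,\cdot\,}_{\alpha,2,t}^2$-quantities.

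Next I would estimate each piece by reusing the computations of Lemmas \ref{l:1} and \ref{l:2}. The drift term and $I_{\Delta h}$ are handled pathwise: the Lipschitz property \ref{(A2)} of $f$ and the affine assumption \ref{(A3)} on $h$ replace $\abs{f(u_{N,n})-f(u_{N,m})}$ and $\abs{h(u_{N,n})-h(u_{N,m})}$ by $C\abs{v}$, and running the $J_f$- and $J_h$-estimates of Lemma \ref{l:1} verbatim gives a bound $C_N\int_0^t\norm{v}_{\alpha,2,s}^2\,ds$. The constant is of the form $C_N$ because the $B^{H,N,m}$-integral carries the factor $\sum_{j}\mu_j^{1/2}\norm{B^{H,N,m}_j}_{\alpha,0;T}$, which is bounded by $CN$: the stopped paths satisfy $\sum_j\mu_j^{1/2}\norm{B^{H,N}_j}_{\alpha,0;T}=\xi_{\alpha,H,\tau_N}\le N$, and the smoothing \eqref{eq:approx-B} does not increase this seminorm by more than a constant factor (\cite[Prop.~2.1]{MSh12}). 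The It\^o term $I_{\Delta g}$ is estimated in expectation through \eqref{eq:Burk} exactly as $\E J_g^N$ in Lemma \ref{l:2}, producing $C\int_0^t\E[\norm{v}_{\alpha,2,s}^2\ind]\,ds$. Finally, for the source term the bound \eqref{eq:bound-int-2} applied to $a^{(n)}_j$ together with the integrand estimates \eqref{eq:f-bound} and \eqref{eq:df-bound} gives, on the event $\set{\norm{u_{N,n}}_{\alpha,2,T}\le R}$,
\[
\norm{I_{\Delta Z}(\cdot,T)}_2\,\ind\le C_R\sum_{j=1}^\infty\mu_j^{1/2}\norm{B^{H,N,n}_j-B^{H,N,m}_j}_{\alpha,0;T},
\]
which upon squaring is precisely the right-hand side of the lemma.

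Collecting the four bounds, inserting the localizing indicator and taking expectations, I would arrive at
\[
\E\bigl[\norm{v}_{\alpha,2,t}^2\ind_{A_t^{N,R}}\bigr]
\le C_{N,R}\,\E\Bigl(\sum_{j=1}^\infty\mu_j^{1/2}\norm{B^{H,N,n}_j-B^{H,N,m}_j}_{\alpha,0;T}\Bigr)^2
+C_{N,R}\int_0^t\E\bigl[\norm{v}_{\alpha,2,s}^2\ind_{A_s^{N,R}}\bigr]\,ds,
\]
where $A_t^{N,R}$ is the obvious analogue of $A_T^{N,R}$ at time $t$, and where I use that $t\mapsto\norm{u_{N,n}}_{\alpha,2,t}$ is nondecreasing, hence $A_T^{N,R}\subseteq A_s^{N,R}$ for $s\le T$, so the localizing indicator may be moved inside the time integral with the smaller index $s$. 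Gronwall's lemma applied to $t\mapsto\E[\norm{v}_{\alpha,2,t}^2\ind_{A_t^{N,R}}]$ then closes the argument at $t=T$.

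The main obstacle is the It\^o term, because the localizing event $A_T^{N,R}$ is $\F_T$-measurable and thus not adapted, whereas the isometry behind \eqref{eq:Burk} requires adapted integrands. I would resolve this by localization: set $\sigma\coloneqq\inf\set{t:\norm{u_{N,n}}_{\alpha,2,t}>R\text{ or }\norm{u_{N,m}}_{\alpha,2,t}>R}\wedge T$, which is a stopping time since both norms are continuous and adapted in $t$; then $A_T^{N,R}=\set{\sigma=T}$, on this event $v$ coincides with the stopped process $v^{\sigma}(\cdot,t)\coloneqq v(\cdot,t\wedge\sigma)$, and one estimates $\E\norm{v^{\sigma}}_{\alpha,2,T}^2$ with all stochastic integrals stopped at $\sigma$, so that every integrand is adapted. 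Equivalently, one keeps the $\F_s$-measurable indicators $\ind_{A_s^{N,R}}$ inside each time integral by the monotonicity noted above. Everything else is a routine, if lengthy, repetition of the estimates already established in Lemmas \ref{l:1} and \ref{l:2}.
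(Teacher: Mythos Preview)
Your proposal is correct and follows essentially the same route as the paper's proof: decompose $u_{N,n}-u_{N,m}$ into the four integrals $I_{\Delta f}$, $I_{\Delta g}$, $I_{\Delta h}$, $I_{\Delta Z}$ (and their $J$-counterparts), recycle the estimates of Lemmas~\ref{l:1}--\ref{l:2} with Lipschitz/affine conditions in place of linear growth, bound the source term by $C_R\,\eta_{\alpha,H,T}^2$ on the localizing event, and close with Gronwall. The only cosmetic differences are that the paper takes the ``common'' noise in $I_{\Delta h}$ to be $B^{H,N,n}_j$ and the ``fixed'' integrand in $I_{\Delta Z}$ to be $h(u_{N,m})$ (the symmetric choice to yours), and that the paper does not spell out the adaptedness issue for the It\^o term---your stopping-time localization via $\sigma$ is in fact a cleaner justification of that step than what the paper writes.
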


\begin{proof}
The proof will be similar to that of Lemmas~\ref{l:1}--\ref{l:2}, so we omit some details.
Denote
$\ind_t=\ind_{A_t^{N,R}}$,
$u=u_{N,n}$, $\tilde u=u_{N,m}$,
$Z_j=B^{H,N,n}_j$, $\widetilde Z_j=B^{H,N,m}_j$,
$\Delta Z_j=Z_j-\widetilde Z_j$,
$\eta_{\alpha,H,t}=\sum_{j=1}^\infty\mu_j^{1/2}\norm{\Delta Z_j}_{\alpha,0;T}$.
We have
\begin{align*}
\norm{u-\tilde u}_{\alpha,2,t}^2
&\le C\sup_{s\in[0,t]}\left(\norm{I_{\Delta f}(\cdot,s)}_2^2 + \norm{I_{\Delta g}(\cdot,s)}_2^2 + \norm{I_{\Delta h}(\cdot,s)}_2^2
+\norm{I_{\Delta Z}(\cdot,s)}_2^2\right)
\\
&\quad + J_{\Delta f}(t) + J_{\Delta g}(t) + J_{\Delta h}(t)+ J_{\Delta Z}(t),
\end{align*}
where
\begin{align*}
I_{\Delta f}(x,t) &\coloneqq \int_0^t\int_D G(x,t;y,s)\bigl(f(u(y,s))-f(\tilde u(y,s))\bigr)\,dy\,ds,
\\
I_{\Delta g}(x,t) &\coloneqq \sum_{j=1}^\infty \lambda_j^{1/2} \int_0^t \int_D G(x,t;y,s) \bigl(g(u(y,s))-g(\tilde u(y,s))\bigr)e_j(y)\,dy\,dB_j(s),
\\
I_{\Delta h}(x,t) &\coloneqq \sum_{j=1}^\infty \mu_j^{1/2} \int_0^t \int_D G(x,t;y,s)\bigl(h(u(y,s))-h(\tilde u(y,s))\bigr)e_j(y)\,dy\,dZ_j(s),
\\
I_{\Delta Z}(x,t) &\coloneqq \sum_{j=1}^\infty \mu_j^{1/2} \int_0^t \int_D G(x,t;y,s)h(\tilde u(y,s))e_j(y)\,dy\,d
\bigl(\Delta Z_j(s)\bigr),
\\
J_a(t) &\coloneqq \int_0^t \left( \int_0^s \frac{\norm{I_a(\cdot,s)-I_a(\cdot,v)}_2}{(s-v)^{\alpha+1}}\,dv \right )^2 ds,
\quad a\in\set{\Delta f, \Delta g, \Delta h, \Delta Z}.
\end{align*}

Similarly to \eqref{eq:Ih0},
\begin{align*}
\norm{I_{\Delta Z}(\cdot,t)}_2 &\le C \eta_{\alpha,H,t} \left(1 + \int_0^t\frac{\norm{\tilde u(\cdot,s)}_2}{s^\alpha}\,ds
+ \int_0^t\int_0^s \frac{\norm{\tilde u(\cdot,s)-\tilde u(\cdot,v)}_2}{(s-v)^{\alpha+1}}\,dv\,ds
\right.
\\
&\quad+\left. \int_0^t (t-s)^{-\delta/2} \int_0^s (s-v)^{\delta/2-\alpha-1} \left(1 + \norm{\tilde u(\cdot,v)}_2\right) \,dv\,ds \right),
\end{align*}
for any $\delta\in\left(\frac{d}{d+2},1\right)$.
Arguing as in the proof of Lemma~\ref{l:1} for the term $I_h$, we obtain
\begin{align*}
\norm{I_{\Delta Z}(\cdot,t)}_2^2 &\le C \eta_{\alpha,H,t}^2 \left(1 + \int_0^t\norm{\tilde u(\cdot,s)}_2^2\,ds
+ \int_0^t\left(\int_0^s \frac{\norm{\tilde u(\cdot,s)-\tilde u(\cdot,v)}_2}{(s-v)^{\alpha+1}}\,dv\right)^2ds\right)
\\
&\le C \eta_{\alpha,H,t}^2 \left(1 + \norm{u}_{\alpha,2,t}^2\right).
\end{align*}
Therefore
\[
\norm{I_{\Delta Z}(\cdot,t)}_2^2\ind_t
\le C R^2 \eta_{\alpha,H,t}^2.
\]
The term $J_{\Delta Z}$ can be bounded analogously to \eqref{eq:Jh}:
\[
J_{\Delta Z}(t)\ind_t\le C\eta_{\alpha,H,t}^2\left(1+\int_0^t\norm{\tilde u}_{\alpha,2,s}^2ds\right)\ind_t
\le C R^2 \eta_{\alpha,H,t}^2.
\]
The terms $I_{\Delta f}$ and  $J_{\Delta f}$ can be estimated in the same way as the terms $I_{f}$ and  $J_{f}$ in the proof of Lemma~\ref{l:1}, using the Lipschitz condition \ref{(A2)}
instead of the inequality $\abs{f(u)}\le C(1+\abs{u})$.
This leads to the bounds
\[
\sup_{s\in[0,t]}\norm{I_{\Delta f}(\cdot,t)}_2^2
\le C\int_0^t\norm{u-\tilde u}_{\alpha,2,s}^2ds
\quad\text{and}\quad
J_{\Delta f}(t) \le C\int_0^t\norm{u-\tilde u}_{\alpha,2,s}^2ds.
\]
In order to estimate $I_{\Delta h}$, we can use the same arguments as for $I_{h}$ in the proof of Lemma~\ref{l:1}, using the bounds
\[
\sup_{j\in\N}\norm{a_{j,t}(u)(\cdot,s)-a_{j,t}(\tilde u)(\cdot,s)}_2 \le C (\norm{u(\cdot,s)-\tilde u(\cdot,s)}_2)
\]
and
\begin{align}
\MoveEqLeft
\sup_{j\in\N}\norm{a_{j,t}(u)(\cdot,s)-a_{j,t}(\tilde u)(\cdot,s)-a_{j,t}(u)(\cdot,v)+a_{j,t}(\tilde u)(\cdot,v)}_2
\notag
\\*
&\le C(t-s)^{-\delta} (s-v)^{\delta} \norm{u(\cdot,v)-\tilde u(\cdot,v)}_2
\notag
\\*
&\quad+C\norm{u(\cdot,s)-\tilde u(\cdot,s) -u(\cdot,v)
+ \tilde u(\cdot,v)}_2,
\label{eq:ddf-bound}
\end{align}
instead of \eqref{eq:f-bound} and \eqref{eq:df-bound} respectively.
These bounds can be established similarly to \eqref{eq:f-bound} and \eqref{eq:df-bound}, see \cite[Lemma~3.3]{SSV09} for their proofs.
Mention that for \eqref{eq:ddf-bound} we need the assumption that $h$ is an affine function.
We will obtain
\[
\sup_{s\in[0,t]}\norm{I_{\Delta h}(\cdot,t)}_2^2
\le C_N\int_0^t\norm{u-\tilde u}_{\alpha,2,s}^2ds.
\]

Finally, the bound
\[
J_{\Delta h}(t)\le C_N\int_0^t\norm{u-\tilde u}_{\alpha,2,s}^2ds.
\]
can be proved similarly to \eqref{eq:Jh}, inequalities \eqref{eq:f*-bound} and \eqref{eq:df*-bound} are replaced by (3.37) and (3.38) from \cite{SSV09} respectively.
Note that for this term we need to choose $\delta\in\left(\frac{d}{d+2},1-2\alpha\right)$,
this leads to the restriction $\alpha<\frac{1}{d+2}$.
We refer to the proof of part (b) of Theorem 2.3 in \cite{SSV09} for the details on the estimation of $I_{\Delta h}$ and $J_{\Delta h}$.

Thus, we see that
\begin{align*}
\norm{u-\tilde u}_{\alpha,2,t}^2
\!\ind_t
&\le C_{N,R}\!\left[\eta_{\alpha,H,t}^2 + \int_0^t\norm{u-\tilde u}_{\alpha,2,s}^2\ind_s ds + \sup_{s\in[0,t]}\norm{I_{\Delta g}(\cdot,t)}_2^2\ind_t+
J_{\Delta g}(t)\ind_t\right]\!.
\end{align*}

Similarly to \eqref{eq:l2b} (using the Lipschitz continuity of $g$ instead of the linear growth condition), we get
\begin{align*}
\E\left[ \sup_{s\in[0,t]}\norm{I_{\Delta g}(\cdot,s)}_2^2 \ind_t\right]
&\le C  \int_0^t\E\left[\norm{u-\tilde u}_{\alpha,2,s}^2\ind_s\right] ds,
\\
\E \left[J_{\Delta g}(t)\ind_t\right]
&\le C \int_0^t\E\left[\norm{u-\tilde u}_{\alpha,2,s}^2\ind_s\right] ds.
\end{align*}

Then
\begin{align*}
\E\left[\norm{u-\tilde u}_{\alpha,2,t}^2\ind_t\right]
&\le C_{N,R}\left(\E \eta_{\alpha,H,t}^2
+ \int_0^t \E \left[\norm{u-\tilde u}_{\alpha,2,s}^2\ind_s\right] ds \right),
\end{align*}
and the result follows from Gronwall's lemma.
\end{proof}

\begin{remark}\label{rem:l3}
It follows from the above proof that the statement of Lemma~\ref{l:3} remains true, if we replace the sequences $\set{B^{H,N,n}_j,j\in\N}$ and $\set{B^{H,N,m}_j,j\in\N}$ by other two sequences $\set{Z_j,j\in\N}$ and $\set{\widetilde Z_j,j\in\N}$ of independent identically distributed H\"older continuous processes, satisfying the assumptions
\[
\sum_{j=1}^\infty\mu_j^{1/2}\norm{Z_j}_{\alpha,0;T}\le N
\quad\text{and}\quad
\sum_{j=1}^\infty\mu_j^{1/2}\norm{\widetilde Z_j}_{\alpha,0;T}\le N.
\]
More precisely, if $u$ and $\tilde u$ are two processes, satisfying \eqref{eq:mild} with $B^H_j$ replaced by $Z_j$ and $\widetilde Z_j$ respectively, then we have
\[
\E\norm{u-\tilde u}_{\alpha,2,T}^2\ind_{A_T^{R}} \le C_{N,R} \E\left(\sum_{j=1}^\infty\mu_j^{1/2}\norm{Z_j-\widetilde Z_j}_{\alpha,0;T}\right)^2,
\]
where
$A_T^{R}=\set{\norm{u}_{\alpha,2,T}\le R, \norm{\tilde u}_{\alpha,2,T}\le R}$.
For example, we may consider the processes
$Z_j=B^{H,N,n}_j$ and $\widetilde Z_j=B^{H,N}_j$ from the proof of Theorem \ref{th:mild} and the corresponding solutions $u=u_{N,n}$ and $\tilde u=u_N$.
\end{remark}

\textbf{Acknowledgement.}
Yu. Mishura is thankful to M. Dozzi for the fruitful discussion of the topic during her visit to the University of Lorraine.

\end{document}